\title{On the Simulation of General Tempered Stable Ornstein-Uhlenbeck Processes}
\author{Michael Grabchak\footnote{Email address: mgrabcha@uncc.edu}\ \ 
{\it University of North Carolina Charlotte}}
\begin{document}
\newtheorem{prop}{Proposition}
\newtheorem{thrm}{Theorem}
\newtheorem{defn}{Definition}
\newtheorem{cor}{Corollary}
\newtheorem{lemma}{Lemma}
\newtheorem{remark}{Remark}
\newtheorem{exam}{Example}

\newcommand{\rd}{\mathrm d}
\newcommand{\rE}{\mathrm E}
\newcommand{\ts}{\mathrm{TS}^p_{\alpha,d}}
\newcommand{\ID}{\mathrm{ID}}
\newcommand{\tr}{\mathrm{tr}}
\newcommand{\iid}{\stackrel{\mathrm{iid}}{\sim}}
\newcommand{\eqd}{\stackrel{d}{=}}
\newcommand{\cond}{\stackrel{d}{\rightarrow}}
\newcommand{\conv}{\stackrel{v}{\rightarrow}}
\newcommand{\conw}{\stackrel{w}{\rightarrow}}
\newcommand{\conp}{\stackrel{p}{\rightarrow}}
\newcommand{\simp}{\stackrel{p}{\sim}}

\maketitle

\begin{abstract}
We give an explicit representation for the transition law of a tempered stable Ornstein-Uhlenbeck process and use it to develop a rejection sampling algorithm for exact simulation of increments from this process. Our results apply to general classes of both univariate and multivariate tempered stable distributions and contain a number of previously studied results as special cases.\\

\noindent\textbf{Keywords:} Tempered stable distributions, Ornstein-Uhlenbeck processes, rejection sampling, selfdecomposability

\end{abstract}

\section{Introduction}

Non-Gaussian processes of Ornstein-Uhlenbeck-type (henceforth OU-processes) form a rich and flexible class of stochastic models. They are the continuous time analogues of autoregressive AR(1) processes and are important in the study of selfdecomposable distributions. Further, they are mean reverting, which makes them useful for many applications. We are particularly motivated by their applications to mathematical finance, where they have been used to model stochastic volatility \cite{Barndorff-Nielsen:Shephard:2001} \cite{Barndorff-Nielsen:Shephard:2002}, stochastic interest rates \cite{Norberg:2004} \cite{Janczura:etal:2011}, and commodity prices \cite{Benth:Kallsen:Meyer-Brandis:2007} \cite{Goncu:Akyildirim:2016}. Here we focus on the class of TSOU-processes, which are OU-processes with tempered stable limiting distributions. These distributions are obtained by modifying the tails of infinite variance stable distributions to make them lighter, which leads to models that are more realistic for a variety of application areas. In particular, discussions of financial applications can be found in \cite{Grabchak:Samorodnitsky:2010}, \cite{Rachev:Kim:Bianchi:Fabozzi:2011}, \cite{Grabchak:2016book}, and the references therein. Some of the earliest tempered stable distributions were introduced in the seminal paper Tweedie (1984) \cite{Tweedie:1984} and are called Tweedie distributions. A general framework was developed in Rosi\'nski (2007) \cite{Rosinski:2007}. This was further generalized, in several directions, in \cite{Rosinski:Sinclair:2010}, \cite{Bianchi:Rachev:Kim:Fabozzi:2011}, and \cite{Grabchak:2012}. A survey, along with a historical overview and many references can be found in \cite{Grabchak:2016book}. 

In this paper, we give an explicit representation for the transition law of a TSOU-process and use it to develop a simulation method  based on rejection sampling. This method extends the rejection sampling technique for simulating tempered stable L\'evy processes, which was introduced in \cite{Grabchak:2019}. 
Similar representations for the transition laws of TSOU-processes with Tweedie (and closely related) limiting distributions are given in \cite{Zhang:Zhang:2009}, \cite{Zhang:2011}, \cite{Kawai:Masuda:2011},  \cite{Kawai:Masuda:2012}, and \cite{Bianchi:Rachev:Fabozzi:2017}. Most of these are special cases of our result.  

The rest of this paper is organized as follows. In Section \ref{sec: aux} we discuss two distributions that will be useful for simulating TSOU-processes and in Section \ref{sec: gen TS} we recall the definition of the class of tempered stable distributions. Then, in Section \ref{sec: gen TSOU} we formally introduce TSOU-processes, give explicit representations of their transition laws, and introduce our methodology for simulation. In Section \ref{sec: pTaS} we specialize our results to the important class of $p$-tempered $\alpha$-stable distributions. A small simulation study is given in Section \ref{sec: sims}.  Proofs are postponed to Section \ref{sec: proofs}.

Before proceeding, we introduce some notation. Let $\mathbb R^d$ be the space of $d$-dimensional column vectors of real numbers equipped with the usual inner product $\langle\cdot,\cdot\rangle$ and the usual norm $|\cdot|$. Let $\mathbb S^{d-1}=\{x\in\mathbb R^d: |x|=1\}$ denote the unit sphere in $\mathbb R^d$. Let $\mathfrak B(\mathbb R^d)$ and $\mathfrak B(\mathbb S^{d-1})$ denote the Borel sets in $\mathbb R^d$ and $\mathbb S^{d-1}$, respectively. If $a,b\in\mathbb R$, we write $a\vee b$ and $a\wedge b$ to denote, respectively, the maximum and the minimum of $a$ and $b$. We write $U(0,1)$ to denote the uniform distribution on $[0,1]$, we write $\delta_x$ to denote a point mass at $x$, and we write $1_A$ to denote the indicator function on set $A$.  If $\mu$ is a probability measure on $\mathbb R^d$, we write $X\sim\mu$ to denote that $X$ is an $\mathbb R^d$-valued random variable with distribution $\mu$. We write $\eqd$ to denote equality in distribution and $\conw$ to denote weak convergence. 

\section{Auxiliary Distributions}\label{sec: aux}

In this section we briefly introduce two distributions, which will be important for simulating increments from TSOU-processes. These are the modified log-Laplace distribution and the generalized gamma distribution.

We begin with the modified log-Laplace distribution. Consider a distribution with probability density function (pdf) given by
\begin{eqnarray*}
g(x;\alpha, p,\delta) &=& \alpha(p-\alpha)\frac{\delta^\alpha}{\alpha\delta^p+p-\alpha}\left(x^{p-\alpha-1}1_{[0<x\le \delta]} + x^{-1-\alpha}1_{[x>\delta]}\right)\\
&=& \frac{\alpha \delta^p}{\alpha\delta^p+p-\alpha}\delta^{-(p-\alpha)}(p-\alpha)x^{p-\alpha-1}1_{[0<x\le\delta]} \\
&&\qquad\quad + \frac{p-\alpha}{\alpha\delta^p+p-\alpha} \alpha \delta^\alpha x^{-1-\alpha}1_{[x>\delta]},
\end{eqnarray*}
where $p>\alpha>0$ and $\delta>0$ are parameters. We call this the modified log-Laplace distribution and denote it by $\mathrm{MLL}(\alpha, p,\delta)$. This distribution is a mixture of the beta distribution with pdf
$$
(p-\alpha)\delta^{-(p-\alpha)}x^{p-\alpha-1}, \ 0<x\le\delta
$$
and the Pareto distribution with pdf
$$
\alpha \delta^\alpha x^{-1-\alpha},\ x>\delta.
$$
The mixing parameter is 
$$
h =  \frac{\alpha\delta^p}{\alpha\delta^p+p-\alpha}.
$$
Using this mixture representation, we can simulate from the distribution as follows.  If $U_1$ and $U_2$ are independent and identically distributed (iid) random variables from $U(0,1)$, then 
$$
Y = U_1^{1/(p-\alpha)}\delta1_{[U_2<h]} + U_1^{-1/\alpha}\delta1_{[U_2\ge h]} \sim \mathrm{MLL}(\alpha, p,\delta).
$$
The reason that we call this the modified log-Laplace distribution is because the log-Laplace distribution is also a mixture of a  beta distribution and a Pareto distribution, but with a different mixing parameter. The modified log-Laplace distribution is a log-Laplace distribution only when $\delta=1$. For more on the log-Laplace distribution see \cite{Kozubowski:Podgorski:2003}. 

We now turn to the generalized gamma distribution. Recall that the pdf of a gamma distribution is given by
$$
\frac{\theta^\beta}{\Gamma(\beta)}u^{\beta-1}e^{-\theta u}, \ \ u>0,
$$
where $\beta,\theta>0$ are parameters. We denote this distribution by $\mathrm{Ga}(\beta,\theta)$. The generalized gamma distribution was introduced in \cite{Stacy:1962}. It has a pdf of the form
$$
\frac{p\theta^{\beta/p}}{\Gamma(\beta/p)}u^{\beta-1}e^{-\theta u^p}, \ \  u>0,
$$
where $\beta,p,\theta>0$ are parameters. We denote this distribution by $\mathrm{GGa}(\beta,p,\theta)$. It is readily checked that if $X\sim \mathrm{Ga}(\beta/p,\theta)$, then $X^{1/p}\sim \mathrm{GGa}(\beta,p,\theta)$. Thus, to simulate from a generalized gamma distribution it suffices to know how to simulate from a gamma distribution. Approaches for doing this are well-known, see e.g.\ \cite{Ahrens:Dieter:1982:gamma} and the references therein. 

\section{Tempered Stable Distributions}\label{sec: gen TS}

An infinitely divisible distribution $\mu$ on $\mathbb R^d$ is a probability measure with a characteristic function of the form $\hat\mu(z) = \exp\{C_\mu(z)\}$, where, for $z\in\mathbb R^d$,
\begin{eqnarray*}
C_\mu(z) = -\frac{1}{2}\langle z,Az\rangle + i\langle b,z\rangle + \int_{\mathbb R^d}\left(e^{i\langle z,x\rangle}-1-i\langle z,x\rangle h(x)\right)M(\rd x).
\end{eqnarray*}
Here, $A$ is a symmetric nonnegative-definite $d\times d$-dimensional matrix called the Gaussian part, $b\in\mathbb R^d$ is called the shift, and $M$ is a Borel measure, called the L\'evy measure, which satisfies
\begin{eqnarray*}
M(\{0\})=0 \mbox{\ and\ } \int_{\mathbb R^d}(|x|^2\wedge1)M(\rd x)<\infty.
\end{eqnarray*}
The function $h:\mathbb R^d\mapsto\mathbb R$, which we call the $h$-function, can be any Borel function satisfying
\begin{eqnarray*}
\int_{\mathbb R^d}\left|e^{i\langle z,x\rangle}-1-i\langle z,x\rangle h(x)\right| M(\rd x)<\infty
\end{eqnarray*}
for all $z\in\mathbb R^d$. For a fixed $h$-function, $A$, $M$, and $b$ uniquely determine the distribution $\mu$, and we write $\mu=\ID_d(A,M,b)_h$. The choice of $h$ does not affect $A$ and $M$, but different choices of $h$ require different values for $b$, see Section 8 in \cite{Sato:1999}.

Associated with every infinitely divisible distribution $\mu=\ID_d(A,M,b)_h$ is a stochastic process  $\{X_t:t\ge0\}$, which is called a L\'evy process. This processes is stochastically continuous and has independent and stationary increments. The characteristic function of $X_t$ is $\left(\hat\mu(z)\right)^t$. It follows that, for each $t\ge0$, $X_t\sim \ID_d(tA,tM,tb)_h$. For more on infinitely divisible distributions and their associated L\'evy processes see \cite{Sato:1999}.

Selfdecomposable distributions are an important class of infinitely divisible distributions. They correspond to the case, where the L\'evy measure $M$ is of the form
\begin{eqnarray*}
M(B) = \int_{\mathbb S^{d-1}}\int_0^\infty 1_{B}(u\xi) k(\xi,u)u^{-1} \rd u \sigma(\rd \xi), \ \ B\in\mathfrak B(\mathbb R^d).
\end{eqnarray*}
Here $\sigma$ is a finite Borel measure on $\mathbb S^{d-1}$ and $k:\mathbb S^{d-1}\times(0,\infty) \mapsto[0,\infty)$ is a Borel function, which, for each fixed $\xi\in\mathbb S^{d-1}$, is nonincreasing in $u$. A more intuitive characterization is as follows. A distribution $\mu$ is selfdecomposable if and only if there is an $X\sim \mu$ such that, for all $c>1$, there exists a random variable $Y_c$, independent of $X$, satisfying 
\begin{eqnarray*}
X\eqd \frac{1}{c}X+Y_c.
\end{eqnarray*}

Following \cite{Rosinski:Sinclair:2010}, we define a tempered stable distribution on $\mathbb R^d$ as an infinitely divisible distribution with no Gaussian part and a L\'evy measure of the form
\begin{eqnarray}\label{eq: levy ts}
L_\alpha(B) = \int_{\mathbb S^{d-1}}\int_0^\infty 1_{B}(u\xi) u^{-1-\alpha} q(\xi,u)\rd u \sigma(\rd \xi), \ \ B\in\mathfrak B(\mathbb R^d),
\end{eqnarray}
where $\alpha\in(0,2)$, $q:\mathbb S^{d-1}\times(0,\infty) \mapsto[0,\infty)$ is a Borel function, and $\sigma$ is a finite Borel measure on $\mathbb S^{d-1}$. The function $q$ is called the tempering function. Throughout this paper we assume that $q$ satisfies the following two assumptions:
\begin{itemize}
\item[\textbf{A1.}] $0\le q(\xi,u) \le 1$ for all $u>0$ and $\xi\in\mathbb S^{d-1}$, and
\item[\textbf{A2.}] $q(\xi,u)$ is nonincreasing in $u$ for each fixed $\xi\in\mathbb S^{d-1}$.
\end{itemize} 
Assumption A1 ensures that the tempering function is bounded, while Assumption A2 ensures that the corresponding distribution is selfdecomposable. For tempered stable distributions we use the $h$-function given by
\begin{eqnarray*}
h_\alpha(x) = \left\{\begin{array}{ll}
0& \alpha\in(0,1)\\
1_{[|x|\le1]} & \alpha=1\\
1 & \alpha\in(1,2)
\end{array}\right.,
\end{eqnarray*}
and we denote the distribution $\ID_d(0,L_\alpha,b)_{h_\alpha}$ by $\mathrm{TS}_{\alpha,d}(\sigma,q,b)$. When $q(\xi,u) \equiv 1$, the distribution $\mathrm{TS}_{\alpha,d}(\sigma,q,b)$ is $\alpha$-stable and we sometimes write $\mathrm{S}_{\alpha,d}(\sigma,b)$ in this case. The characteristic function of $\mathrm{TS}_{\alpha,d}(\sigma,q,b)$ is given by
\begin{eqnarray*}
\exp\left\{i\langle b,z\rangle + \int_{\mathbb S^{d-1}}\int_0^\infty \psi_\alpha(z,u\xi) q(\xi,u)u^{-1-\alpha}\rd u\sigma(\rd\xi)\right\}, \ \ z\in\mathbb R^d,
\end{eqnarray*}
where
\begin{eqnarray*}
\psi_\alpha(z,x) = e^{i\langle z,x \rangle} - 1 - i\langle z,x \rangle h_\alpha(x) \mbox{ for } z,x\in\mathbb R^d.
\end{eqnarray*}

\begin{remark}
The name ``tempering function'' can be explained as follows. It is often assumed that $q$ satisfies the additional assumption that for each $\xi\in\mathbb S^{d-1}$
 \begin{eqnarray}\label{eq: additional assumption}
 \lim_{u\to0}q(\xi,u)=1 \mbox{ and } \lim_{u\to\infty}q(\xi,u)=0.
 \end{eqnarray}
In this case, the tempered stable distribution $\mathrm{TS}_{\alpha,d}(\sigma,q,b)$ has a distribution function that looks like that of the stable distribution $\mathrm{S}_{\alpha,d}(\sigma,b)$ in some central region, but with lighter tails. In this sense, $\mathrm{TS}_{\alpha,d}(\sigma,q,b)$  ``tempers'' the tails of $\mathrm{S}_{\alpha,d}(\sigma,b)$.  While this is part of the motivation for defining tempered stable distributions, we will not require \eqref{eq: additional assumption} to hold in this paper. 
\end{remark}

\section{TSOU-Processes}\label{sec: gen TSOU}

We begin this section by recalling the definition of a process of Ornstein-Uhlenbeck-type (OU-process). Toward this end, let $Z=\{Z_t:t\ge0\}$ be a L\'evy process with $Z_1\sim \ID_d(A,M,b)_h$ and define the process $\{Y_t:t\ge0\}$ by the stochastic differential equation
$$
\rd Y_t = -\lambda Y_t\rd t + \rd Z_t,
$$
where $\lambda>0$ is a parameter. This has a strong solution of the form
\begin{eqnarray*}
Y_t = e^{-\lambda t}Y_0 + \int_0^t e^{-\lambda(t-s)} \rd Z_s.
\end{eqnarray*}
In this case $Y=\{Y_t:t\ge0\}$ is called an OU-process with parameter $\lambda$ and $Z$ is called the background driving L\'evy process (BDLP). The process $Y$ is a Markov process and, so long as
\begin{eqnarray*}
\int_{|x|>2} \log|x| M(\rd x)<\infty,
\end{eqnarray*}
it has a limiting distribution. This distribution is necessarily selfdecomposable. Further, every selfdecomposable distribution is the limiting distribution of some OU-process. For details see \cite{Sato:1999} or \cite{Rocha-Arteaga:Sato:2003}. 

Since $\mathrm{TS}_{\alpha,d}(\sigma,q,b)$ is selfdecomposable, it is the limiting distribution of some OU-process, which we call a TSOU-process. We now characterize the BDLP and the transition law of this process.

\begin{thrm}\label{thrm: trans char func}
Consider the distribution $\mathrm{TS}_{\alpha,d}(\sigma,q,b)$ and denote its L\'evy measure by $L_\alpha$. Let $Y=\{Y_t:t\ge0\}$ be an OU-process with parameter $\lambda>0$ and BDLP $Z=\{Z_t:t\ge0\}$. Assume that $Z_1\sim \ID_d(0, \lambda M, \lambda c_\alpha)_{h_\alpha}$ with
$$
c_\alpha = b - \left\{\begin{array}{ll}
\int_{|x|>1} \frac{x}{|x|} M(\rd x) & \mbox{if  }\alpha=1\\
0 & \mbox{otherwise}
\end{array}\right.,
$$
and
$$
M(B) = \int_{\mathbb S^{d-1}} \int_0^\infty 1_B(u\xi) \rd \rho_\xi(u) \sigma(\rd \xi), \ \ B\in\mathfrak B(\mathbb R^d),
$$
where $\rho_\xi(u) = -u^{-\alpha} q(\xi,u)$. In this case, $Y$ is a Markov process with temporally homogenous transition function $P_t(y,\rd x)$ having characteristic function
$\int_{\mathbb R^d}e^{i\langle x,z\rangle} P_t(y,\rd x) = \exp\left\{C_t(y,z)\right\}$, where
\begin{eqnarray*}
C_t(y,z) &=&  ie^{-\lambda t}\langle y,z\rangle +  i  \langle  d_{\alpha},z\rangle + \left(1-e^{-\lambda t\alpha}\right)\int_{\mathbb R^d}\psi_\alpha(z,x) L_\alpha(\rd x)  \\
&&\ + e^{-\alpha\lambda t} \int_{\mathbb S^{d-1}}\int_0^\infty \psi_\alpha(z,u\xi) \left( q(\xi,u) - q(\xi,ue^{\lambda t}) \right)  u^{-1-\alpha} \rd u  \sigma(\rd \xi),\\
d_{\alpha} &=&   \left(1-e^{-\lambda t}\right) b +  \left\{\begin{array}{ll} 
\tilde d_1 &  \mbox{if  } \alpha=1\\
0 & \mbox{otherwise}
\end{array}\right.,
\end{eqnarray*}
and
$$
\tilde d_1 =\int_{1<|x|\le e^{\lambda t}}  x \left(|x|^{-1}-e^{-\lambda t}\right) M(\rd x) - \left(1-e^{-\lambda t}\right)\int_{|x|>1} \frac{x}{|x|} M(\rd x).
$$
Further, for any $y\in\mathbb R^d$, $P_t(y,\cdot)\conw \mathrm{TS}_{\alpha,d}(\sigma,q,b)$ as $t\to\infty$.
\end{thrm}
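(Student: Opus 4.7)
My approach is to exploit the explicit representation $Y_t = e^{-\lambda t}Y_0 + \int_0^t e^{-\lambda(t-s)}\,\rd Z_s$, from which the Markov property and temporal homogeneity are automatic since $Z$ has independent and stationary increments. Conditioning on $Y_0=y$, the standard formula for stochastic integrals of deterministic integrands against Lévy processes gives
$$\log \rE\bigl[e^{i\langle z, Y_t\rangle}\bigm|Y_0=y\bigr] = ie^{-\lambda t}\langle y,z\rangle + \int_0^t C_Z(e^{-\lambda u} z)\,\rd u,$$
where $C_Z$ is the characteristic exponent of $Z_1\sim\ID_d(0,\lambda M,\lambda c_\alpha)_{h_\alpha}$. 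My goal is to identify the right-hand side with $C_t(y,z)$.

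The drift part of $C_Z$ integrates to $i(1-e^{-\lambda t})\langle c_\alpha,z\rangle$. For the Lévy part, I would insert the polar form of $M$ and change variables $r=we^{-\lambda u}$ in the inner radial integral, so that the argument becomes $\psi_\alpha(z,r\xi)$, plus, when $\alpha=1$, an $h$-function defect addressed below. Applying Fubini to exchange the Stieltjes integration in $w$ with the Lebesgue integration in $u$, and using the elementary identity
$$\rho_\xi(re^{\lambda t}) - \rho_\xi(r) = r^{-\alpha}\bigl[q(\xi,r) - e^{-\alpha\lambda t} q(\xi,re^{\lambda t})\bigr],$$
collapses the double integral to $\int_{\mathbb S^{d-1}}\int_0^\infty \psi_\alpha(z,r\xi)\,r^{-1-\alpha}[q(\xi,r)-e^{-\alpha\lambda t}q(\xi,re^{\lambda t})]\,\rd r\,\sigma(\rd\xi)$, which rearranges algebraically to the two-piece Lévy component stated in the theorem. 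For $\alpha\ne 1$ the function $h_\alpha$ is constant, so there is no defect and the shift reduces to $d_\alpha = (1-e^{-\lambda t})b$ directly.

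The case $\alpha=1$ is the main obstacle because $h_1(x)=1_{[|x|\le 1]}$ does not commute with the rescaling: the change of variables replaces $h_1(w\xi)$ by $h_1(re^{\lambda u}\xi)$ rather than $h_1(r\xi)$, leaving a residual drift contribution $ir\langle z,\xi\rangle(1_{[r\le 1]}-1_{[w\le 1]})$, which is nonzero only on $\{1<w\le e^{\lambda u}\}$. A direct computation of $\int_0^t\int_{\{1<w\le e^{\lambda u}\}} we^{-\lambda u}\,\rd u\,\rd\rho_\xi(w)$ yields $\int_{1<|x|\le e^{\lambda t}} x(|x|^{-1}-e^{-\lambda t})M(\rd x)$, and combining this with the $-(1-e^{-\lambda t})\int_{|x|>1}(x/|x|)M(\rd x)$ piece supplied by $c_1$ gives exactly $\tilde d_1$.

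For the asymptotic statement I would invoke Lévy's continuity theorem, so pointwise convergence of $C_t(y,z)$ in $z$ suffices. The term $ie^{-\lambda t}\langle y,z\rangle$ vanishes, $(1-e^{-\alpha\lambda t})\to 1$, and the residual integral is bounded uniformly in $t$ by $\int_{\mathbb S^{d-1}}\int_0^\infty|\psi_\alpha(z,u\xi)|u^{-1-\alpha}\,\rd u\,\sigma(\rd\xi)$, which is finite for every $\alpha\in(0,2)$, so its prefactor $e^{-\alpha\lambda t}$ kills it. For $\alpha\ne 1$ we have $d_\alpha\to b$ immediately; for $\alpha=1$, integration by parts against $\rho_\xi$, combined with $\int_1^{e^{\lambda t}} u^{-1}\,\rd u=\lambda t = o(e^{\lambda t})$, shows $e^{-\lambda t}\int_{1<|x|\le e^{\lambda t}} xM(\rd x)\to 0$, hence $\tilde d_1\to 0$ and $d_1\to b$, giving the weak convergence to $\mathrm{TS}_{\alpha,d}(\sigma,q,b)$.
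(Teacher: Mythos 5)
Your proposal is correct and follows essentially the same route as the paper: the paper invokes Lemma 17.1 of Sato (1999) for the transition characteristic function that you derive from the explicit solution, then performs the same change of variables and Fubini step using the identity for $\rho_\xi$, the same $\alpha=1$ scaling-defect computation yielding $\tilde d_1$, and the same argument that the $e^{-\alpha\lambda t}$ prefactor kills the residual integral in the $t\to\infty$ limit. The only cosmetic difference is in showing $d_1\to b$ for $\alpha=1$, where the paper uses dominated convergence while you integrate by parts against $\rho_\xi$; both are valid.
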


In the above $\rho_\xi$ is a nondecreasing function and the integral with respect to $\rd \rho_\xi(u)$ is the Stieltjes integral. We now give an explicit representation for an increment of a TSOU-process, under an additional assumption.

\begin{thrm}\label{thrm: main gen}
Let $\{Y_t:t\ge0\}$ be a TSOU-process with parameter $\lambda>0$ and limiting distribution $\mathrm{TS}_{\alpha,d}(\sigma,q,b)$. Fix $t>0$ and let
\begin{eqnarray}\label{eq:K}
K = \int_{\mathbb S^{d-1}}\int_0^\infty  \left( q(\xi,u) - q(\xi,ue^{\lambda t}) \right)  u^{-1-\alpha} \rd u \sigma(\rd \xi) .
\end{eqnarray}
If $K=0$, then, given $Y_s=y$, we have
\begin{eqnarray}\label{eq: trans RV stable}
Y_{s+t} \eqd e^{-\lambda t} y + d_{\alpha} + X_0
\end{eqnarray}
and if $0<K<\infty$, then, given $Y_s=y$, we have
\begin{eqnarray}\label{eq: trans RV}
Y_{s+t} \eqd e^{-\lambda t} y + d_{\alpha} + X_0 + \sum_{j=1}^{N} X_j - \psi.
\end{eqnarray}
In the above
$$
\psi =  e^{-\alpha\lambda t} \int_{\mathbb S^{d-1}}\int_0^\infty h_\alpha(u\xi) \left( q(\xi,u) - q(\xi,ue^{\lambda t}) \right)  u^{-\alpha} \rd u \xi \sigma(\rd \xi)
$$
and $N, X_0, X_1, X_2, X_3, \dots$ are independent random variables with:\\
1. $X_0\sim \mathrm{TS}_{\alpha,d}(\sigma_0,q,0)$ where $\sigma_0(\rd \xi) = (1-e^{-\alpha \lambda t})\sigma(\rd \xi)$,\\
2. $X_1, X_2, X_3, \dots$ are iid random variables such that $X_i\eqd \xi W$, where $\xi\in\mathbb S^{d-1}$, $W>0$, and the joint distribution of $\xi$ and $W$ is
$$
H(\rd \xi,\rd u)=\frac{1}{K} \left( q(\xi,u) - q(\xi,ue^{\lambda t}) \right)  u^{-1-\alpha} \rd u  \sigma(\rd \xi), \ \ u>0,\ \xi\in\mathbb S^{d-1},
$$
3. $N$ has a Poisson distribution with mean $Ke^{-\alpha \lambda t}$.
\end{thrm}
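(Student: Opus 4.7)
My approach is to match characteristic functions: Theorem~\ref{thrm: trans char func} already gives the characteristic function $\exp\{C_t(y,z)\}$ of the transition law, so it suffices to show that the random variable on the right-hand side of \eqref{eq: trans RV} (or \eqref{eq: trans RV stable}) has the same characteristic function. Since the pieces $N, X_0, X_1, X_2,\dots$ are independent, this reduces to computing four factors and multiplying them.

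First, the deterministic shift $e^{-\lambda t}y + d_\alpha - \psi$ contributes the linear-in-$z$ term $i\langle e^{-\lambda t}y + d_\alpha - \psi, z\rangle$. Second, since $X_0\sim\mathrm{TS}_{\alpha,d}(\sigma_0,q,0)$ with $\sigma_0 = (1-e^{-\alpha\lambda t})\sigma$, its characteristic function is (by the explicit form recalled in Section~\ref{sec: gen TS})
\[
\exp\!\left\{(1-e^{-\alpha\lambda t})\int_{\mathbb S^{d-1}}\!\!\int_0^\infty\!\!\psi_\alpha(z,u\xi)\,q(\xi,u)\,u^{-1-\alpha}\,\mathrm du\,\sigma(\mathrm d\xi)\right\},
\]
which is precisely the exponential of the third term of $C_t(y,z)$. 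Third, since $N\sim\mathrm{Poisson}(Ke^{-\alpha\lambda t})$ and the $X_j$ are iid with distribution determined by $H$, the sum $\sum_{j=1}^N X_j$ is compound Poisson with characteristic function
\[
\exp\!\left\{e^{-\alpha\lambda t}\!\!\int_{\mathbb S^{d-1}}\!\!\int_0^\infty\!(e^{i\langle z,u\xi\rangle}-1)\bigl(q(\xi,u)-q(\xi,ue^{\lambda t})\bigr)u^{-1-\alpha}\,\mathrm du\,\sigma(\mathrm d\xi)\right\},
\]
since the factor $1/K$ in $H$ cancels against the Poisson mean $Ke^{-\alpha\lambda t}$.

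The key algebraic step is to combine this compound-Poisson factor with the shift contribution $-i\langle\psi,z\rangle$. Writing $\psi_\alpha(z,u\xi)=(e^{i\langle z,u\xi\rangle}-1)-i\langle z,u\xi\rangle h_\alpha(u\xi)$ and inspecting the definition of $\psi$, we see that $-i\langle\psi,z\rangle$ exactly supplies the missing $-i\langle z,u\xi\rangle h_\alpha(u\xi)$ term inside the $e^{-\alpha\lambda t}$-integral; thus these two contributions together produce the fourth term of $C_t(y,z)$. Multiplying all four factors reconstructs $\exp\{C_t(y,z)\}$, which by uniqueness of characteristic functions yields the claimed equality in distribution. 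The case $K=0$ is handled in exactly the same way with the compound-Poisson factor replaced by $1$ and $\psi=0$ (indeed $K=0$ forces $q(\xi,\cdot)$ to be locally constant on each ray in a way that kills the relevant integrand).

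The main obstacles I anticipate are bookkeeping rather than deep: (i) verifying that $H$ is a genuine probability measure on $\mathbb S^{d-1}\times(0,\infty)$ — which is immediate from $K<\infty$ and the division by $K$ — and that $\xi W$ has the correct law on $\mathbb R^d$; and (ii) checking that $\psi$ is well-defined, which is automatic for $\alpha\in(0,1)$ (where $h_\alpha\equiv0$ so $\psi=0$), straightforward for $\alpha=1$ (the truncation $1_{[|x|\le 1]}$ keeps the integral bounded), and for $\alpha\in(1,2)$ follows from the finiteness of the fourth term of $C_t(y,z)$ established in Theorem~\ref{thrm: trans char func}, which allows one to split off the $i\langle z,u\xi\rangle$ piece without divergence. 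Once these integrability points are noted, everything else is characteristic-function bookkeeping.
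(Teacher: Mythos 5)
Your proposal is correct and takes essentially the same route as the paper: the paper's proof likewise matches the characteristic function of the right-hand side of \eqref{eq: trans RV} against $\exp\{C_t(y,z)\}$ from Theorem \ref{thrm: trans char func}, using the scaled L\'evy measure $(1-e^{-\alpha\lambda t})L_\alpha$ for $X_0$ and the compound Poisson formula for $\sum_{j=1}^N X_j$ (with $-\psi$ absorbing the $h_\alpha$ compensation), only stated more tersely. Your additional integrability remarks and the $K=0$ case are consistent with the paper's argument.
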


\begin{remark}
1. If $q(\xi,u)\equiv1$, then $\mathrm{TS}_{\alpha,d}(\sigma,q,b) =\mathrm S_{\alpha,d}(\sigma,b)$. In this case $K=0$ and hence the transition law for an OU-process with an $\alpha$-stable limiting distribution can be represented as in \eqref{eq: trans RV stable}. 2. If $\alpha\in(0,1)$ then $h_\alpha\equiv0$ and hence $\psi=0$.
\end{remark}

Theorem \ref{thrm: main gen} provides a simple recipe for simulating an increment from a TSOU-process when $K<\infty$. The main ingredients are the ability to simulate from three distributions: the Poisson distribution, the $\mathrm{TS}_{\alpha,d}(\sigma_0,q,0)$ distribution, and distribution $H$. The problem of simulating from a Poisson distribution is well-studied, see e.g.\ \cite{Ahrens:Dieter:1982}. Under mild conditions, a rejection sampling technique for simulating from $\mathrm{TS}_{\alpha,d}(\sigma_0,q,0)$  is given in \cite{Grabchak:2019}. To simulate from $H$ we first define the quantities
$$
\kappa_\xi = \frac{1}{\int_0^\infty \left( q(\xi,u) - q(\xi,ue^{\lambda t}) \right) u^{-1-\alpha}\rd u}, \ \ \xi\in\mathbb S^{d-1},
$$
the probability measure on $\mathbb S^{d-1}$ given by
$$
\sigma_1(\rd \xi) = \frac{1}{\kappa_\xi K}\sigma(\rd \xi),
$$
and the family of pdfs on $(0,\infty)$ given by
\begin{eqnarray}\label{eq: f xi}
f_\xi(u)= \kappa_\xi \left( q(\xi,u) - q(\xi,ue^{\lambda t}) \right)  u^{-1-\alpha}  , \ \ u>0.
\end{eqnarray}
It is readily checked that
$$
H(\rd \xi,\rd u)= f_\xi(u) \rd u \sigma_1(\rd \xi), \ \ u>0,\ \xi\in\mathbb S^{d-1}.
$$ 
Hence, one can simulate $(\xi,W)$ from $H$ by first simulating $\xi$ from $\sigma_1$ and then $W$ from the distribution with pdf $f_\xi$. Thus, the problem reduces to that of simulating from $\sigma_1$ and from the pdf $f_\xi$.

We begin by discussing simulation from $\sigma_1$. One of the most important situations is when the support of $\sigma_1$ is finite. This always holds, in particular, when the dimension $d=1$. In this case, the problem reduces to simulating from a multinomial distribution. Another important situation is when $\sigma_1$ follows a uniform distribution. This problem is well-studied, see e.g.\ \cite{Tashiro:1977}. For a discussion of simulation from a variety of other standard distributions on $\mathbb S^{d-1}$, see the monograph \cite{Johnson:1987}. While no method works in general, one can often set up an approximate simulation method by first approximating $\sigma_1$ by a distribution with a finite support, see Lemma 1 in \cite{Byczkowski:Nolan:Rajput:1993}.

We now discuss simulation from $f_\xi$ in two important situations with $K<\infty$. The first, which we call ``hard truncation,'' covers a useful but fairly specific situation, while the second, which we call ``Class F,'' is very general.

\subsection{Hard Truncation}

Consider the distribution $\mathrm{TS}_{\alpha,d}(\sigma,q,b)$, where $q(\xi,u) = 1_{[0\le u\le\gamma_\xi]}$. Assume that $\gamma_\xi\ge0$ for each $\xi\in\mathbb S^{d-1}$, that the function $\xi\mapsto \gamma_\xi$ is measurable, and that 
$
\int_{\mathbb S^{d-1}} \gamma_\xi^{-\alpha} \sigma(\rd \xi)<\infty.
$ 
Such distributions appear in certain limit theorems, see \cite{Chakrabarty:Samorodnitsky:2012} and \cite{Grabchak:2018}. In this case, it is readily checked that $K<\infty$ and that the pdf $f_\xi$, as given in \eqref{eq: f xi}, can be written as
$$
f_\xi(u)=  \frac{\alpha\gamma_\xi^\alpha}{(e^{\alpha\lambda t}-1)} u^{-1-\alpha}  , \ \ e^{-\lambda t}\gamma_\xi<u\le \gamma_\xi.
$$
The corresponding cumulative distribution function (cdf) can be written as
$$
F_\xi(x)=  \frac{\gamma_\xi^\alpha}{(e^{\alpha\lambda t}-1)} \left(e^{\alpha\lambda t}\gamma_\xi^{-\alpha}-x^{-\alpha}\right), \ \ x\in(e^{-\lambda t}\gamma_\xi, \gamma_\xi)
$$
and, for $y\in(0,1)$,
$$
F_\xi^{-1}(y) = \gamma_\xi\left(e^{\alpha\lambda t}-\left(e^{\alpha\lambda t}-1\right)y\right)^{-1/\alpha}.
$$
Hence, we can use the inverse transform method to simulate $W$ from $f_\xi$ by first simulating $U\sim U(0,1)$ and then taking $W=F_\xi^{-1}(U)$.

\subsection{Class F}
 
 Consider the distribution $\mathrm{TS}_{\alpha,d}(\sigma,q,b)$ and assume that there exists an $\epsilon>0$ and Borel functions $p(\cdot),M_\bullet:\mathbb S^{d-1}\mapsto [0,\infty)$ with
\begin{eqnarray*}
\inf_{\xi\in\mathbb S^{d-1}}p(\xi)>\alpha,  \ \ \ \int_{\mathbb S^{d-1}} M_\xi \left(e^{p(\xi)\lambda t}-1\right)\sigma(\rd \xi)<\infty,
\end{eqnarray*}
such that for any $u,v\in(0,\epsilon)$ and $\sigma$-a.e.\ $\xi$
\begin{eqnarray*}
\left| q(\xi,u) - q(\xi,v)\right| \le M_\xi \left| u^{p(\xi)} - v^{p(\xi)}\right|.
\end{eqnarray*}
In this case, we say that $q$ belongs to Class F and write $q\in\mathrm{CF}_{\alpha,d}(\epsilon,M_\bullet,p(\cdot),\sigma)$. For simplicity, we will sometimes also say that the distribution $\mu=\mathrm{TS}_{\alpha,d}(\sigma,q,b)$ belongs to Class F and write $\mu\in\mathrm{CF}_{\alpha,d}(\epsilon,M_\bullet,p(\cdot),\sigma)$. It is easily verified that, in this case, $K<\infty$.

It is not always easy to check if a given tempering function belongs to Class F. A sufficient condition is that $q(\xi,u) = q_1(\xi,u^{p(\xi)})$, where, for $\sigma$-a.e.\ $\xi$, the function $q_1(\xi,\cdot)$ is uniformly Lipschitz continuous on a neighborhood of zero. By this, we mean that there are constants $M',\epsilon'>0$ such that for  any $u,v\in(0,\epsilon')$ and $\sigma$-a.e.\ $\xi\in\mathbb S^{d-1}$
\begin{eqnarray}\label{eq: q1}
\left| q_1(\xi,u)-q_1(\xi,v)\right|\le M'|u-v|.
\end{eqnarray}
In this case $q\in\mathrm{CF}_{\alpha,d}(\epsilon,M_\bullet,p(\cdot),\sigma)$ with $\epsilon=\epsilon'$ and $M_\bullet\equiv M'$. In particular, by the mean value theorem,  \eqref{eq: q1} holds whenever $\left|\frac{\partial}{\partial u}q_1(\xi,u)\right|\le M'$ for all $u\in(0,\epsilon')$ and $\sigma$-a.e.\ $\xi\in\mathbb S^{d-1}$.

We now develop a rejection sampling approach for simulating from $f_\xi$ in this case.  The approach is based on the following result.

\begin{prop}\label{prop:for AR}
If $q\in\mathrm{CF}_{\alpha,d}(\epsilon,M_\bullet,p(\cdot),\sigma)$ and $f_\xi$ is as in \eqref{eq: f xi}, then
$$
f_\xi(u)\le V_1 g_1(u), \ \ u>0,
$$
where $g_1$ is the pdf of $\mathrm{MLL}(\alpha,p(\xi),\delta_0)$, $\delta_0 = \epsilon e^{-\lambda t}$, and
$$
V_1 = \kappa_\xi \frac{\alpha\delta_0^{p(\xi)}+p(\xi)-\alpha}{\delta_0^{\alpha}\alpha(p(\xi)-\alpha)}  \max\left\{1,M_\xi \left(e^{p(\xi)\lambda t}-1\right)\right\}.
$$
\end{prop}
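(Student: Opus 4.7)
The plan is to first substitute the explicit form of the $\mathrm{MLL}(\alpha, p(\xi), \delta_0)$ pdf into $V_1 g_1(u)$ and watch the constants collapse. Writing $p = p(\xi)$ for brevity and multiplying out, the factor $\alpha(p-\alpha)\delta_0^\alpha/(\alpha\delta_0^p+p-\alpha)$ in $g_1$ exactly cancels against the prefactor in $V_1$, leaving
\begin{equation*}
V_1 g_1(u) = \kappa_\xi \, C \cdot \bigl( u^{p-\alpha-1} 1_{[0<u\le\delta_0]} + u^{-1-\alpha} 1_{[u>\delta_0]}\bigr),
\end{equation*}
where $C := \max\{1, M_\xi(e^{p\lambda t}-1)\}$. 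Since $\kappa_\xi > 0$ appears as a common factor on both sides, it suffices to show
\begin{equation*}
\bigl(q(\xi,u)-q(\xi,ue^{\lambda t})\bigr) u^{-1-\alpha} \le C \bigl(u^{p-\alpha-1} 1_{[0<u\le\delta_0]} + u^{-1-\alpha} 1_{[u>\delta_0]}\bigr).
\end{equation*}

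I would then split into the two regimes dictated by the indicator functions. For $u > \delta_0$, Assumption A1 gives $0 \le q \le 1$, so $q(\xi,u)-q(\xi,ue^{\lambda t}) \le 1 \le C$, and dividing by $u^{1+\alpha}$ yields the required inequality directly.

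For $0 < u \le \delta_0 = \epsilon e^{-\lambda t}$, both $u$ and $ue^{\lambda t}$ lie in $(0,\epsilon]$, so the Class F hypothesis applies (appealing to nonincreasing monotonicity from A2 at the boundary point if needed). It gives
\begin{equation*}
q(\xi,u)-q(\xi,ue^{\lambda t}) \le M_\xi \bigl| u^{p} - (ue^{\lambda t})^{p}\bigr| = M_\xi(e^{p\lambda t}-1) u^{p}.
\end{equation*}
Multiplying by $u^{-1-\alpha}$ produces $M_\xi(e^{p\lambda t}-1) u^{p-\alpha-1} \le C u^{p-\alpha-1}$, completing the bound.

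No step is substantive; the main care goes into tracking the constants through the product $V_1 g_1$ so that the $\max\{1,M_\xi(e^{p\lambda t}-1)\}$ factor emerges cleanly to cover both cases simultaneously.
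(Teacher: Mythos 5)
Your argument is correct and is essentially the paper's own proof: split at $\delta_0=\epsilon e^{-\lambda t}$, apply the Class F bound $q(\xi,u)-q(\xi,ue^{\lambda t})\le M_\xi(e^{p(\xi)\lambda t}-1)u^{p(\xi)}$ on the lower range and $q\le 1$ on the upper range, with the constant bookkeeping you spell out being exactly what the paper leaves implicit in ``and the result follows.'' Your aside about the single boundary point $u=\delta_0$ (where the stated Class F condition on the open interval $(0,\epsilon)$ does not literally apply) is a measure-zero technicality that the paper's proof also glosses over and that is irrelevant for the rejection sampler.
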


Let $\varphi_1(u) = f_\xi(u)/(V_1 g_1(u))$ and note that
$$
\varphi_1(u) = \frac{q(\xi,u) - q(\xi,ue^{\lambda t}) }{\left( u^{p(\xi)}1_{[0<u\le \delta_0]} + 1_{[u>\delta_0]}\right)  \max\left\{1,M_\xi \left(e^{p(\xi)\lambda t}-1\right)\right\} }.
$$
With this notation, we get the following rejection sampling algorithm for simulating from $f_\xi$ for a fixed $\xi$. \\

\noindent \textbf{Algorithm 1.}\\
1. Independently simulate $U\sim U(0,1)$ and $Y\sim \mathrm{MLL}(\alpha,p(\xi),\delta_0)$.\\
2. If $U\le \varphi_1(Y)$ return $Y$, otherwise go back to step 1.\\

From general facts about rejection sampling algorithms, we know that, on a given iteration, the probability of accepting the observation is $1/V_1$ and the number of iterations until we accept follows a geometric distribution with mean $V_1$. Thus, the algorithm is more efficient when $V_1$ is small.

\section{$p$-Tempered $\alpha$-Stable Distributions}\label{sec: pTaS}

In the previous sections, we considered tempered stable distributions with very general tempering functions. However, it is often convenient to work with families of tempering functions that have additional structure. One such family, which is commonly used, corresponds to the case where
\begin{eqnarray}\label{eq: q to p temp}
q(\xi,u) = \int_{(0,\infty)} e^{-su^p} Q_\xi(\rd s).
\end{eqnarray}
Here $p>0$ and $\bar Q=\{Q_\xi:\xi\in\mathbb S^{d-1}\}$ is a measurable family of probability measures on $(0,\infty)$. For fixed $p>0$ and $\alpha\in(0,2)$, we refer to the class of tempered stable distributions with such tempering functions as $p$-tempered $\alpha$-stable. For $p=1$, these models were introduced in \cite{Rosinski:2007}, for $p=2$ they were introduced in \cite{Bianchi:Rachev:Kim:Fabozzi:2011}, and the general case was introduced in \cite{Grabchak:2012}. See also the recent monograph \cite{Grabchak:2016book} for an overview.

Now, consider the distribution $\mathrm{TS}_{\alpha,d}(\sigma,q,b)$, where $q$ is as in \eqref{eq: q to p temp} and define the Borel measures
\begin{eqnarray}\label{eq: Q}
Q(B) = \int_{\mathbb S^{d-1}}\int_{(0,\infty)} 1_{B}(s\xi) Q_\xi(\rd s)\sigma(\rd \xi), \ \ B\in\mathfrak B(\mathbb R^d)
\end{eqnarray}
and
\begin{eqnarray}\label{eq: R}
R(B) = \int_{\mathbb R^{d}} 1_{B}\left(\frac{x}{|x|^{1+1/p}}\right) |x|^{\alpha/p}Q(\rd x), \ \ B\in\mathfrak B(\mathbb R^d).
\end{eqnarray}
When we know $R$, we can calculate $Q$ by using the formula
\begin{eqnarray}\label{eq: back to Q}
Q(B) = \int_{\mathbb R^{d}}1_{B}\left( \frac{x}{|x|^{1+p}}\right)|x|^{\alpha} R(\rd x), \ \ B\in\mathfrak B(\mathbb R^d).
\end{eqnarray}
It can be shown that, in this case, the L\'evy measure as given by \eqref{eq: levy ts} can be written as
\begin{eqnarray}\label{eq: L pTS}
L_\alpha(B) = \int_{\mathbb R^{d}}\int_0^\infty 1_{B}(t x) t^{-1-\alpha} e^{-t^p} \rd t R(\rd x), \ \ B\in\mathfrak B(\mathbb R^d)
\end{eqnarray}
and the measure $\sigma$ can be written as
\begin{eqnarray}\label{eq:back to sig}
\sigma(B) = \int_{\mathbb R^d}1_B\left(\frac{x}{|x|}\right)|x|^\alpha R(\rd x),\ \ B\in\mathfrak B(\mathbb S^{d-1}),
\end{eqnarray}
see Chapter 3 in \cite{Grabchak:2016book}. Further, for fixed $\alpha\in(0,2)$ and $p>0$, the measure $R$ uniquely determines both $q$ and $\sigma$. For this reason, we will generally write $\ts(R,b)$ instead of $\mathrm{TS}_{\alpha,d}(\sigma,q,b)$. We call $R$ the Rosi\'nski measure of the distribution. A Borel measure $R$ on $\mathbb R^d$ is the Rosi\'nski measure of some $p$-tempered $\alpha$-stable distribution if and only if
\begin{eqnarray}\label{eq: cond for R measure}
R(\{0\})=0 \mbox{\ and\ } \int_{\mathbb R^d}|x|^\alpha R(\rd x)<\infty.
\end{eqnarray}

\begin{remark}
One can define a more general class of distributions with L\'evy measures of the form \eqref{eq: L pTS}, but where $R$ does not satisfy \eqref{eq: cond for R measure}. In \cite{Grabchak:2016book} distributions that satisfy  \eqref{eq: cond for R measure} are called proper $p$-tempered $\alpha$-stable.
\end{remark}

We now characterize when we can use the representation given in Theorem \ref{thrm: main gen} and when the distributions belong to class F.

\begin{prop}\label{prop: finite K}
Let $\mu=\ts(R,b)$ and let $K$ be as in \eqref{eq:K}. \\
1. If $p\le\alpha$ or $R$ is an infinite measure, then $K=\infty$.\\ 
2. If $p>\alpha$ and $R$ is a finite measure, then
$$
K = R(\mathbb R^d)\alpha^{-1}\Gamma(1-\alpha/p)\left(e^{\lambda t\alpha}-1\right)<\infty
$$
and the result of Theorem \ref{thrm: main gen} holds. If, in addition, 
\begin{eqnarray}\label{eq: on R for F}
\int_{\mathbb R^d} |x|^{-(p-\alpha)} R(\rd x) <\infty,
\end{eqnarray}
then $\mu\in \mathrm{CF}_{\alpha,d}(\epsilon, M_\bullet,p,\sigma)$ with $M_\xi = \int_{(0,\infty)} s Q_\xi(\rd s)$ and any $\epsilon>0$. 
\end{prop}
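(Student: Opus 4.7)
The plan is to compute $K$ explicitly in terms of $R$ by carrying out the inner integral in $u$ via a Frullani-type evaluation. Substituting $q(\xi,u)=\int_{(0,\infty)} e^{-su^p}Q_\xi(\rd s)$ into \eqref{eq:K} and applying Tonelli (the integrand is nonnegative since $q(\xi,\cdot)$ is nonincreasing), one gets
\begin{eqnarray*}
K = \int_{\mathbb S^{d-1}}\int_{(0,\infty)}\int_0^\infty \left(e^{-su^p}-e^{-su^p e^{p\lambda t}}\right) u^{-1-\alpha}\,\rd u\,Q_\xi(\rd s)\,\sigma(\rd \xi).
\end{eqnarray*}
For each fixed $s>0$, the substitution $w=su^p$ reduces the inner $u$-integral to $\frac{1}{p}s^{\alpha/p}\int_0^\infty (e^{-w}-e^{-w e^{p\lambda t}})w^{-1-\alpha/p}\rd w$.

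When $p\le\alpha$ the exponent $\alpha/p\ge 1$ and the integrand in $w$ is equivalent near the origin to a positive constant times $w^{-\alpha/p}$, which fails to be integrable; hence the inner integral is $+\infty$ for every $s>0$, forcing $K=\infty$ (assuming the underlying $\sigma$ is nontrivial, else $\mu$ is degenerate). When $p>\alpha$ I would write $e^{-w}-e^{-w e^{p\lambda t}} = \int_1^{e^{p\lambda t}} w\,e^{-cw}\,\rd c$ and apply Tonelli together with $\int_0^\infty w^{-\alpha/p}e^{-cw}\rd w = \Gamma(1-\alpha/p)c^{\alpha/p-1}$ to evaluate the $w$-integral as $\frac{p}{\alpha}\Gamma(1-\alpha/p)(e^{\lambda t\alpha}-1)$. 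Combining,
\begin{eqnarray*}
K = \frac{\Gamma(1-\alpha/p)(e^{\lambda t\alpha}-1)}{\alpha}\int_{\mathbb S^{d-1}}\int_{(0,\infty)} s^{\alpha/p}Q_\xi(\rd s)\sigma(\rd \xi).
\end{eqnarray*}
By \eqref{eq: Q} the double integral equals $\int_{\mathbb R^d}|x|^{\alpha/p}Q(\rd x)$, and by \eqref{eq: R} with $B=\mathbb R^d$ this in turn equals $R(\mathbb R^d)$. This yields the stated formula in Part 2, and when $R$ is infinite (with $p>\alpha$) it gives $K=\infty$, completing Part 1.

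For the Class F assertion I would bound $|q(\xi,u)-q(\xi,v)|\le \int|e^{-su^p}-e^{-sv^p}|Q_\xi(\rd s)$ and use the elementary inequality $|e^{-a}-e^{-b}|\le|a-b|$ for $a,b\ge 0$ to conclude $|q(\xi,u)-q(\xi,v)|\le M_\xi|u^p-v^p|$ with $M_\xi=\int s\,Q_\xi(\rd s)$, valid for all $u,v>0$, so any $\epsilon>0$ works and $\inf_\xi p(\xi)=p>\alpha$ is immediate. For the integrability side-condition $\int M_\xi(e^{p\lambda t}-1)\sigma(\rd\xi)<\infty$, the relation \eqref{eq: back to Q} applied to $f(x)=|x|$ gives
\begin{eqnarray*}
\int_{\mathbb S^{d-1}} M_\xi\,\sigma(\rd\xi) = \int_{\mathbb R^d}|x|\,Q(\rd x) = \int_{\mathbb R^d}\left|\frac{x}{|x|^{1+p}}\right||x|^\alpha R(\rd x) = \int_{\mathbb R^d}|x|^{-(p-\alpha)}R(\rd x),
\end{eqnarray*}
which is finite by hypothesis \eqref{eq: on R for F}.

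The only delicate bookkeeping is the Frullani evaluation of the $w$-integral and checking that the borderline case $p=\alpha$ genuinely diverges; once that is handled, the Class F verification is a one-line Lipschitz estimate combined with the change of variables between $Q$ and $R$.
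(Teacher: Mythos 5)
Your proposal is correct and takes essentially the same route as the paper: reduce $K$ to $R(\mathbb R^d)$ times a universal one-dimensional integral, show that integral diverges at the origin when $p\le\alpha$, and verify Class F via the bound $|e^{-a}-e^{-b}|\le|b-a|$ combined with the change of variables linking $Q$ and $R$ (the paper's Lemma 1). The only difference is cosmetic: you evaluate the one-dimensional integral by a self-contained Frullani/Tonelli computation, whereas the paper invokes its Lemma 3, which cites a known Gamma-function identity.
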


\begin{remark}
For more on the finiteness of $M_\xi$, as given above, see Lemma \ref{lemma: moments of Q} in Section \ref{sec: proofs} below. When $p\le\alpha$, we have $K=\infty$ and the result of Theorem \ref{thrm: main gen} does not hold. In this case a different framework is needed. Building on the work of \cite{Kawai:Masuda:2012} for certain types of Tweedie distributions, we will develop such a framework in a future work. 
\end{remark}

We now turn to the problem of simulating increments from the transition law of a TSOU-process with limiting measure $\mu=\ts(R,b)$. For the remainder of this section assume that $p>\alpha$, that $0<R(\mathbb R^d)<\infty$, and that \eqref{eq: on R for F} holds. Thus $\mu$ belongs to Class F and we can use Theorem \ref{thrm: main gen}. This requires us to simulate from the distributions $\mathrm{TS}_{\alpha,d}(\sigma_0,q,0)$ and $H(\rd \xi,\rd u)$. It is easy to see that $\mathrm{TS}_{\alpha,d}(\sigma_0,q,0) = \ts(R_0,b)$, where $R_0(\rd x) = (1-e^{-\alpha\lambda t})R(\rd x)$. So long as the support of $\sigma$ contains $d$ linearly independent vectors, we can use Algorithm 1 in \cite{Grabchak:2019} to simulate from this distribution, see Proposition 2 in that paper. To simulate from $H(\rd \xi,\rd u)$ we must simulate from $\sigma_1$ and $f_\xi$. We now derive more explicit formulas for this case.

\begin{prop}\label{prop: form pTaS}
If $\mu=\ts(R,b)$ has $\bar Q=\{Q_\xi:\xi\in\mathbb S^{d-1}\}$, then
\begin{eqnarray*}
\kappa_\xi &=& \frac{\alpha}{\Gamma(1-\alpha/p) \left(e^{\lambda t\alpha}-1\right)  \int_{(0,\infty)} s^{\alpha/p} Q_\xi(\rd s)}= \frac{R(\mathbb R^d)}{K\int_{(0,\infty)} s^{\alpha/p} Q_\xi(\rd s)}, \\
\sigma_1(\rd\xi) &=& \frac{1}{R(\mathbb R^d)}  \int_{(0,\infty)} s^{\alpha/p} Q_\xi(\rd s)\sigma(\rd \xi)
\end{eqnarray*}
and
\begin{eqnarray*}
f_\xi(u) &=&  \kappa_\xi \int_{(0,\infty)}\left(e^{-u^ps} - e^{-u^pse^{p\lambda t}} \right) Q_\xi(\rd s)u^{-1-\alpha}, \ u>0, \ \xi\in\mathbb S^{d-1}.
\end{eqnarray*}
\end{prop}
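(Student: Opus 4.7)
The plan is to reduce every formula in the proposition to a single closed-form evaluation of the inner $u$-integral appearing in the definition of $\kappa_\xi^{-1}$. First, plugging the defining representation $q(\xi,u) = \int_{(0,\infty)} e^{-su^p} Q_\xi(\rd s)$ into the difference gives
$$
q(\xi,u) - q(\xi, u e^{\lambda t}) = \int_{(0,\infty)} \bigl(e^{-s u^p} - e^{-s e^{p\lambda t} u^p}\bigr) Q_\xi(\rd s).
$$
Since the integrand is nonnegative, I would apply Tonelli's theorem to swap the $\rd u$ and $Q_\xi$ integrals and thereby reduce the problem to evaluating
$$
J(c) := \int_0^\infty \bigl(e^{-c r^p} - e^{-c e^{p\lambda t} r^p}\bigr) r^{-1-\alpha} \rd r, \qquad c > 0,
$$
at $c = s$, and then integrating the result against $Q_\xi$.

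The main technical point is evaluating $J(c)$. Writing $e^{-cr^p} - e^{-ce^{p\lambda t}r^p} = (1 - e^{-ce^{p\lambda t}r^p}) - (1 - e^{-cr^p})$ before doing anything else is essential, because each of $\int_0^\infty e^{-cr^p} r^{-1-\alpha}\rd r$ individually diverges at $r = 0$; only the difference is integrable, and this is exactly where the hypothesis $p > \alpha$ enters. For each of the two rewritten pieces, the substitution $v = cr^p$ (so that $r^{-1-\alpha}\rd r = \frac{c^{\alpha/p}}{p}v^{-\alpha/p-1}\rd v$) turns the integral into $\frac{c^{\alpha/p}}{p}\int_0^\infty (1-e^{-v}) v^{-\alpha/p-1}\rd v$, and a single integration by parts, again using $\alpha/p \in (0,1)$ so that the boundary terms at $0$ and $\infty$ both vanish, evaluates the latter as $(p/\alpha)\Gamma(1-\alpha/p)$. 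Subtracting the two pieces, I obtain
$$
J(c) = \frac{c^{\alpha/p}\,\Gamma(1-\alpha/p)\,(e^{\lambda t \alpha}-1)}{\alpha}.
$$

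Integrating $J(s)$ against $Q_\xi(\rd s)$ and inverting yields the first claimed formula for $\kappa_\xi$. For the second form, I would use \eqref{eq: Q} and \eqref{eq: R} to read off the identity
$$
R(\mathbb R^d) = \int_{\mathbb S^{d-1}}\int_{(0,\infty)} s^{\alpha/p} Q_\xi(\rd s)\,\sigma(\rd\xi);
$$
integrating $\kappa_\xi^{-1}$ against $\sigma(\rd\xi)$ then recovers the value $K = R(\mathbb R^d)\,\Gamma(1-\alpha/p)(e^{\lambda t\alpha}-1)/\alpha$ from Proposition \ref{prop: finite K}, so $\alpha/[\Gamma(1-\alpha/p)(e^{\lambda t\alpha}-1)] = R(\mathbb R^d)/K$, which rewrites $\kappa_\xi$ in the second form. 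The expression $\sigma_1(\rd\xi) = \sigma(\rd\xi)/(\kappa_\xi K)$ then reduces to the displayed formula by direct substitution, and the formula for $f_\xi$ is nothing more than $\kappa_\xi$ times the $Q_\xi$-integral representation of $q(\xi,u) - q(\xi, ue^{\lambda t})$ from the opening step. The only real obstacle is the closed-form evaluation of $J(c)$ by the combine-then-IBP trick; everything after that is bookkeeping.
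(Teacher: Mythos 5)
Your proposal is correct and follows essentially the same route as the paper: substitute the $Q_\xi$-representation of $q$, use Tonelli and a rescaling to reduce $\kappa_\xi^{-1}$ to a single scalar integral, evaluate that integral in closed form, and then obtain the second form of $\kappa_\xi$, $\sigma_1$, and $f_\xi$ by the identity $R(\mathbb R^d)=\int_{\mathbb S^{d-1}}\int_{(0,\infty)}s^{\alpha/p}Q_\xi(\rd s)\sigma(\rd\xi)$ and direct substitution, exactly as the paper treats these as ``immediate.'' The only difference is that your combine-then-integrate-by-parts evaluation of $J(c)$ is a self-contained re-derivation of the paper's Lemma \ref{lemma: integ diff}, which the paper instead proves by the substitution $v=u^p$ together with a cited identity involving $\Gamma(-\alpha/p)$; both give the same constant, and your explicit use of $p>\alpha$ to justify integrability and the vanishing boundary terms is sound.
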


In light of Proposition \ref{prop: finite K}, we can simulate from $f_\xi$ by using Algorithm 1 with $\delta_0 = \epsilon e^{-\lambda t}=1$, $Y\sim\mathrm{MLL}(\alpha,p,1)$, and 
\begin{eqnarray*}
\varphi_1(u) = \frac{ \int_{(0,\infty)}\left(e^{-u^ps} - e^{-u^pse^{p\lambda t}} \right) Q_\xi(\rd s)}{\left(u^{p}1_{[0<u\le1]} + 1_{[u>1]}\right)\max\left\{1,\int_{(0,\infty)} s Q_\xi(\rd s)\left(e^{p \lambda t}-1\right)\right\}}  .
\end{eqnarray*}
In this case,
\begin{eqnarray*}
V_1 &=& \frac{\max\left\{1, \left(e^{t\lambda p}-1\right)\int_{(0,\infty)} s Q_\xi(\rd s)\right\}}{\Gamma(2-\alpha/p) \left(e^{\lambda t\alpha}-1\right)  \int_{(0,\infty)} s^{\alpha/p} Q_\xi(\rd s)}.
\end{eqnarray*}

In some cases we can improve on Algorithm 1. An issue with the MLL distribution is that it has heavy tails, which can lead to many rejections when the tails of $f_\xi$ are lighter. However, when the support of $Q_\xi$ is lower bounded, we can use the pdf of the generalized gamma distribution, which has lighter tails. The method is based on the following result.

\begin{prop}\label{prop: alt bound}
Let $\zeta= \sup\{c>0: Q_\xi((0,c))=0\}$. If $\zeta>0$ and $\int_{(0,\infty)} s Q_\xi(\rd s)<\infty$, then 
$$
f_\xi(u)\le V_2 g_2(u), \ \ u>0,
$$
where $g_2$ is the pdf of a $\mathrm{GGa}(p-\alpha,p,\zeta)$ distribution and
\begin{eqnarray*}
V_2 &=& \zeta^{\alpha/p-1} \frac{\alpha\left(e^{t\lambda p}-1\right)\int_{(0,\infty)}s Q_\xi(\rd s)}{p\left(e^{t\lambda \alpha}-1\right)\int_{(0,\infty)}s^{\alpha/p} Q_\xi(\rd s)}.
\end{eqnarray*}
\end{prop}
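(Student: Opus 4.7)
My plan is to estimate the integrand of $f_\xi$ pointwise in $s$, then integrate. Starting from the explicit formula
$$
f_\xi(u) = \kappa_\xi u^{-1-\alpha}\int_{(0,\infty)} \bigl(e^{-u^p s}-e^{-u^p s e^{p\lambda t}}\bigr) Q_\xi(\mathrm d s)
$$
given by Proposition \ref{prop: form pTaS}, the two key ingredients are: (i) the elementary inequality $1-e^{-x}\le x$ for $x\ge 0$, which writing $e^{-u^p s}-e^{-u^p s e^{p\lambda t}} = e^{-u^p s}\bigl(1-e^{-u^p s(e^{p\lambda t}-1)}\bigr)$ yields
$$
e^{-u^p s}-e^{-u^p s e^{p\lambda t}} \le u^p s \bigl(e^{p\lambda t}-1\bigr)\, e^{-u^p s},
$$
and (ii) the observation that, by the definition of $\zeta$, we have $Q_\xi((0,\zeta))=0$, so $s\ge \zeta$ for $Q_\xi$-a.e.\ $s$ and hence $e^{-u^p s}\le e^{-u^p \zeta}$ for those $s$. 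Multiplying the two bounds gives $\bigl(e^{-u^p s}-e^{-u^p s e^{p\lambda t}}\bigr) \le u^p s (e^{p\lambda t}-1) e^{-u^p \zeta}$ for $Q_\xi$-a.e.\ $s$.

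Integrating in $s$ and using the finiteness of $\int s\, Q_\xi(\mathrm d s)$, I obtain
$$
f_\xi(u) \le \kappa_\xi \bigl(e^{p\lambda t}-1\bigr)\int_{(0,\infty)} s\, Q_\xi(\mathrm d s)\cdot u^{p-\alpha-1} e^{-\zeta u^p}, \qquad u>0.
$$
The right-hand side has exactly the functional form of the $\mathrm{GGa}(p-\alpha,p,\zeta)$ density
$$
g_2(u) = \frac{p\zeta^{(p-\alpha)/p}}{\Gamma((p-\alpha)/p)}\, u^{p-\alpha-1} e^{-\zeta u^p}.
$$
Rewriting the bound as $V_2 g_2(u)$ gives
$$
V_2 = \frac{\Gamma((p-\alpha)/p)}{p\,\zeta^{(p-\alpha)/p}}\,\kappa_\xi\bigl(e^{p\lambda t}-1\bigr)\int_{(0,\infty)} s\, Q_\xi(\mathrm d s).
$$

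The final step is to simplify this constant using the explicit formula $\kappa_\xi = \alpha\big/\bigl(\Gamma(1-\alpha/p)(e^{\lambda t\alpha}-1)\int s^{\alpha/p} Q_\xi(\mathrm d s)\bigr)$ from Proposition \ref{prop: form pTaS}, together with the identity $\Gamma(1-\alpha/p)=\Gamma((p-\alpha)/p)$ and $\zeta^{-(p-\alpha)/p} = \zeta^{\alpha/p-1}$, which produces precisely the expression for $V_2$ claimed in the statement. There is no real obstacle here; the only thing requiring care is invoking the definition of $\zeta$ correctly so that the pointwise bound on $e^{-u^p s}$ holds $Q_\xi$-almost surely, and noting that the hypothesis $\int s\, Q_\xi(\mathrm d s)<\infty$ is exactly what is needed to make the final constant finite.
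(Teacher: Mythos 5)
Your proof is correct and follows essentially the same route as the paper: the same pointwise bound $e^{-a}-e^{-b}\le e^{-a}(b-a)$ (the paper's Lemma \ref{lemma: exp fact}, which you rederive via $1-e^{-x}\le x$), the same use of $s\ge\zeta$ for $Q_\xi$-a.e.\ $s$, and then identification with the $\mathrm{GGa}(p-\alpha,p,\zeta)$ density. The only difference is that you spell out the simplification of the constant via $\kappa_\xi$, which the paper leaves implicit, and your computation of $V_2$ checks out.
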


Now, let $\varphi_2(u) = f_\xi(u)/(V_2 g_2(u))$ and note that
$$
\varphi_2(u) = \frac{ \int_{(0,\infty)}\left(e^{-u^ps} - e^{-u^pse^{p\lambda t}} \right) Q_\xi(\rd s) }{\left(e^{t\lambda p}-1\right)\int_{(0,\infty)}s Q_\xi(\rd s) u^p e^{-u^p\zeta}}.
$$
With this notation, when $\zeta>0$, we get the following rejection sampling algorithm for simulating from $f_\xi$ for a fixed $\xi\in\mathbb S^{d-1}$.\\

\noindent \textbf{Algorithm 2.}\\
1. Independently simulate $U\sim U(0,1)$ and $Y\sim \mathrm{GGa}(p-\alpha,p,\zeta)$.\\
2. If $U\le \varphi_2(Y)$ return $Y$, otherwise go back to step 1.\\

Algorithm 2 works better than Algorithm 1 when $V_2<V_1$. Since
$$
\frac{V_2}{V_1} \le  \zeta^{\alpha/p-1} \Gamma(2-\alpha/p)\frac{\alpha}{p},
$$
Algorithm 2 is always better when $\zeta>\left(\Gamma(2-\alpha/p)\frac{\alpha}{p}\right)^{\frac{1}{1-\alpha/p}}$. \\

\noindent\textbf{Example.} In \cite{Kawai:Masuda:2011} a version of Algorithm 2 was derived for one-dimensional TSOU-processes with certain types of Tweedie limiting distributions. Specifically, the distributions considered are of the form $\ts(R,0)$ where $d=1$, $p=1$, $\alpha\in(0,1)$, and $R(\rd x) = a\zeta^\alpha \delta_{1/\zeta}(\rd x)$, where $a,\zeta>0$. These correspond to $\sigma(\rd \xi) =a \delta_{1}(\rd\xi)$ and
$$
q_1(u) = e^{-\zeta u} = \int_{(0,\infty)} e^{-ur}Q_1(\rd r),
$$
where $Q_1(\rd r) = \delta_\zeta(\rd r)$. In this case, the trial distribution is  $\mathrm{GGa}(1-\alpha,1,\zeta)$ and 
\begin{eqnarray*}
\varphi_2(u)= \frac{1-e^{\zeta u(1-e^{\lambda t})}}{\zeta u(e^{\lambda t}-1)}.
\end{eqnarray*}
It follows that our Algorithm 2 reduces to Algorithm 3 in \cite{Kawai:Masuda:2011}. We note that, in that paper, there appears to be a typo in the formula for $\varphi_2$, which they denote by $g_{2,\Delta}$. It should be as given above.

\section{Simulation Study}\label{sec: sims}

In this section we perform a small-scale simulation study to see how well our methodology works in practice. We focus on a family of tempered stable distributions for which the transition law had not been previously derived. 

\subsection{Power Tempered Stable Distributions}

A distribution is said to be power tempered stable if it is  $\ts(R,b)$ with $d=1$, $p=1$, $\alpha\in(0,1)$, and
$$
R(\rd x) =A(1+|x|)^{-2-\alpha-\ell} \rd x,
$$
where 
$$
A =  c(\alpha+\ell+1)\frac{\alpha}{\Gamma(1-\alpha)}
$$
and $\ell,c>0$ are parameters. We denote such distributions by $\mathrm{PT}_\alpha(\ell,c)$. These models were introduced in  \cite{Grabchak:2016book} as a class of tempered stable distributions with a finite mean, but still fairly heavy tails. In fact, if $Y\sim\mathrm{PT}_\alpha(\ell,c)$, then, for $\beta\ge0$,
$$
\rE|Y|^\beta<\infty \mbox{ if and only if } \beta<1+\alpha+\ell.
$$
Thus, $\ell$ controls how heavy the tails are. Methods for evaluating the pdfs and related quantities for these distribution are available in the SymTS package \cite{Grabchak:Cao:2017} for the statistical software R.  We now give some useful facts.

\begin{prop}\label{prop: for example 1}
For a $\mathrm{PT}_\alpha(\ell,c)$ distribution with $\alpha\in(0,1)$ and $\ell,c>0$ we have
$$
R(\mathbb R) = c \frac{2\alpha}{\Gamma(1-\alpha)}<\infty \ \mbox{  and  } \int_{\mathbb R} |x|^{-(1-\alpha)} R(\rd x) <\infty.
$$
Further, 
$$
Q(\rd x) = A (1+|x|)^{-2-\alpha-\ell}  |x|^{\ell}\rd x,
$$
\begin{eqnarray*}
\sigma(\rd \xi) = AB \big(\delta_{-1}(\rd \xi) + \delta_{1}(\rd \xi) \big),
\end{eqnarray*}
and for $\xi\in\mathbb S^0=\{-1,1\}$
\begin{eqnarray}\label{eq: Q xi for power}
Q_\xi(\rd x) = \frac{1}{B} (1+x)^{-2-\alpha-\ell}  x^{\ell}1_{[x>0]}\rd x, 
\end{eqnarray}
where
\begin{eqnarray*}
B = \int_{0}^\infty (1+x)^{-2-\alpha-\ell} x^{\ell}\rd x=\int_{0}^\infty (1+x)^{-2-\alpha-\ell} x^{\alpha}\rd x.
\end{eqnarray*}
\end{prop}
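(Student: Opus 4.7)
The plan is essentially direct computation using the formulas already established for $p$-tempered $\alpha$-stable distributions, specialized to $d=1$ and $p=1$. Since the density of $R$ is symmetric and polynomial, every integral that appears reduces either to a Beta function or to the substitution $y = 1/x$.

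First I would compute $R(\mathbb R)$ by noting the symmetry of $R$ about the origin, writing
$$
R(\mathbb R) = 2A\int_0^\infty (1+x)^{-2-\alpha-\ell}\,\rd x = \frac{2A}{1+\alpha+\ell},
$$
and substituting $A = c(\alpha+\ell+1)\alpha/\Gamma(1-\alpha)$. The moment integral $\int |x|^{-(1-\alpha)}R(\rd x)$ reduces by the same symmetry to $2A\int_0^\infty x^{\alpha-1}(1+x)^{-2-\alpha-\ell}\,\rd x$, which is the Beta function $B(\alpha,2+\ell)$ and is finite because $\alpha>0$ and $\ell>-2$.

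Next I would extract $Q$ using \eqref{eq: back to Q} with $p=1$, which for $d=1$ says $Q(B) = \int 1_B(x/|x|^2)|x|^\alpha R(\rd x)$. Splitting into $x>0$ and $x<0$ parts and applying $y=1/x$ (so $\rd x = -\rd y/y^2$, $|x|=1/|y|$), the density factor transforms as
$$
y^{-\alpha}\left(\frac{y+1}{y}\right)^{-2-\alpha-\ell}y^{-2} = y^\ell(1+y)^{-2-\alpha-\ell},
$$
yielding the stated form $Q(\rd x) = A(1+|x|)^{-2-\alpha-\ell}|x|^\ell\,\rd x$. Then $\sigma$ follows from \eqref{eq:back to sig}: since $x/|x|\in\{-1,+1\}$, the measure $\sigma$ is concentrated on $\pm1$, and the mass at each atom equals $A\int_0^\infty x^\alpha(1+x)^{-2-\alpha-\ell}\,\rd x = AB$. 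Finally, reading off $Q_\xi$ from \eqref{eq: Q} together with $Q$ and $\sigma$ just computed gives \eqref{eq: Q xi for power}.

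The only nontrivial algebraic identity is the equality of the two forms of $B$, namely $\int_0^\infty(1+x)^{-2-\alpha-\ell}x^\ell\,\rd x = \int_0^\infty(1+x)^{-2-\alpha-\ell}x^\alpha\,\rd x$. This is the same $y=1/x$ substitution used in Step 3 applied to the integrand $x^\ell(1+x)^{-2-\alpha-\ell}$: the Jacobian and the transformed factor $((1+y)/y)^{-2-\alpha-\ell}$ combine to replace $y^{-\ell}y^{-2}y^{2+\alpha+\ell} = y^\alpha$. I do not expect any serious obstacle; the main task is just keeping the changes of variable and the factor $A$ straight, and checking convergence at both $0$ and $\infty$ of each integral, which is immediate from the exponents $\alpha,\ell\in(0,\infty)$ with $\alpha<1$.
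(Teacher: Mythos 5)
Your proposal is correct and follows essentially the same route as the paper, which simply asserts that the proposition follows from \eqref{eq: back to Q}, \eqref{eq:back to sig}, and \eqref{eq: Q}; your computation just fills in the routine details (the $y=1/x$ substitution and the Beta-type integrals) that the paper leaves to the reader.
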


For these distributions, an exact simulation technique, based on rejection sampling, was developed in \cite{Grabchak:2019}. Specifically, to simulate from $\mathrm{PT}_\alpha(\ell,c)$ we can use Algorithm 1 in \cite{Grabchak:2019} with, in the notation of that paper,
$$
\eta = \frac{\Gamma(1-\alpha)}{\alpha}R(\mathbb R) = 2c,
$$
see Proposition 2 in \cite{Grabchak:2019}. Detailed simulations can be found in that paper.

Here, we are interested in simulating increments from a TSOU-process with limiting distribution $\mathrm{PT}_\alpha(\ell,c)$. Proposition \ref{prop: finite K} implies that this distribution belongs to Class F and that we can use the representation given in Theorem \ref{thrm: main gen}. To use this representation, we need to simulate from $\mathrm{TS}^1_{\alpha,1}(R_0,0)$ and $H(\rd \xi,\rd u)$. It is readily checked that, in this case, $\mathrm{TS}^1_{\alpha,1}(R_0,0)=\mathrm{PT}_\alpha(\ell,\left(1-e^{-\alpha\lambda t}\right)c)$ and, thus, that we can use the methodology described above to simulate this component. Further, to implement our methodology, we can use the following.

\begin{prop}\label{prop: for example}
For a  $\mathrm{PT}_\alpha(\ell,c)$ distribution with $\alpha\in(0,1)$ and $\ell,c>0$ we have
$$
\sigma_1(\rd \xi) = .5\big(\delta_{-1}(\rd \xi) + \delta_{1}(\rd \xi) \big)
$$
and 
$$
V_1 = \frac{(\alpha+\ell+1)B}{\Gamma(2-\alpha)(e^{\lambda t\alpha}-1)}\max\left\{1, \frac{e^{t\lambda}-1}{B} \int_0^\infty (1+s)^{-2-\alpha-\ell} s^{\ell+1} \rd s \right\} .
$$
Further, for $\xi\in\mathbb S^0=\{-1,1\}$ and $u>0$ we have
\begin{eqnarray}\label{eq: f xi for power}
f_\xi(u) = \frac{\alpha (\alpha+\ell+1)}{\Gamma(1-\alpha)(e^{\lambda t\alpha}-1)} \int_0^\infty \left(e^{-us}-e^{-use^{\lambda t}}\right) (1+s)^{-2-\alpha-\ell} s^{\ell}\rd s u^{-1-\alpha},
\end{eqnarray}
and for $u>0$ we have
\begin{eqnarray*}
\varphi_1(u) = \frac{ \int_0^\infty \left(e^{-us} - e^{-use^{\lambda t}} \right) (1+s)^{-2-\alpha-\ell}s^\ell \rd s}{B\left(u 1_{[0<u\le1]} + 1_{[u>1]}\right)\max\left\{1, \frac{e^{t\lambda}-1}{B} \int_0^\infty (1+s)^{-2-\alpha-\ell} s^{\ell+1} \rd s \right\}}.
\end{eqnarray*}
\end{prop}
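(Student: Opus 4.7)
The plan is to derive all four formulas by specializing the general expressions of Proposition \ref{prop: form pTaS} and the Class F setup (Proposition \ref{prop:for AR} with the algorithm that follows) to the concrete $\mathrm{PT}_\alpha(\ell,c)$ data supplied by Proposition \ref{prop: for example 1}. Since $p=1$ here, the whole calculation reduces to moment computations for the probability measures $Q_\xi$ given by \eqref{eq: Q xi for power}, all of which are Beta integrals in disguise.

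First I would evaluate the key moment $\int_{(0,\infty)} s^\alpha Q_\xi(\rd s)$. By \eqref{eq: Q xi for power} this equals $B^{-1}\int_0^\infty (1+s)^{-2-\alpha-\ell}s^{\ell+\alpha}\rd s$, and recognizing this as the Beta function $B(1,\ell+\alpha+1)=1/(\ell+\alpha+1)$ gives $\int s^\alpha Q_\xi(\rd s)=\frac{1}{B(\ell+\alpha+1)}$. Substituting this together with $\sigma(\rd\xi)=AB(\delta_{-1}+\delta_1)$, $R(\mathbb R)=2c\alpha/\Gamma(1-\alpha)$, and $A=c(\alpha+\ell+1)\alpha/\Gamma(1-\alpha)$ into the formulas of Proposition \ref{prop: form pTaS} yields
\[
\sigma_1(\rd\xi)=\tfrac{1}{2}\bigl(\delta_{-1}(\rd\xi)+\delta_1(\rd\xi)\bigr)
\]
after the constants telescope, and yields $\kappa_\xi=\alpha B(\ell+\alpha+1)/[\Gamma(1-\alpha)(e^{\lambda t\alpha}-1)]$. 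Plugging this $\kappa_\xi$ and the density $Q_\xi$ into the formula for $f_\xi$ from Proposition \ref{prop: form pTaS} (with $p=1$) gives \eqref{eq: f xi for power} directly.

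For $V_1$ I would invoke Proposition \ref{prop:for AR} with $p(\xi)\equiv 1$ and the choice $\delta_0=\epsilon e^{-\lambda t}=1$ (valid by Proposition \ref{prop: finite K}, where $\epsilon$ is arbitrary), which collapses the prefactor $[\alpha\delta_0^{p(\xi)}+p(\xi)-\alpha]/[\delta_0^\alpha\alpha(p(\xi)-\alpha)]$ to $1/[\alpha(1-\alpha)]$. Combined with the already-computed $\kappa_\xi$, using the identity $(1-\alpha)\Gamma(1-\alpha)=\Gamma(2-\alpha)$, and with $M_\xi=\int sQ_\xi(\rd s)=B^{-1}\int_0^\infty(1+s)^{-2-\alpha-\ell}s^{\ell+1}\rd s$ plugged into the $\max\{1,M_\xi(e^{\lambda t}-1)\}$ factor, one recovers the displayed expression for $V_1$.

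Finally, $\varphi_1$ is obtained by substituting the $p$-tempered form $q(\xi,u)=\int e^{-us}Q_\xi(\rd s)$ and the Class F expression for $\varphi_1$ displayed just after Proposition \ref{prop:for AR}, again with $p(\xi)=1$ and $\delta_0=1$, and then pulling the factor $1/B$ out of the numerator. No step is genuinely difficult; the only thing to be careful about is bookkeeping of the constants $A$, $B$, and $R(\mathbb R)$ and the recognition of the two Beta integrals (one giving $B$ itself in the equivalent form $\int_0^\infty(1+x)^{-2-\alpha-\ell}x^\alpha\rd x$, the other giving $1/(\ell+\alpha+1)$), together with the standard gamma identity $(1-\alpha)\Gamma(1-\alpha)=\Gamma(2-\alpha)$.
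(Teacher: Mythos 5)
Your proposal is correct and follows essentially the same route as the paper: specialize the general formulas of Proposition \ref{prop: form pTaS} and the Class F discussion (with $p=1$, $\delta_0=\epsilon e^{-\lambda t}=1$) using the key moment $\int_0^\infty(1+x)^{-2-\alpha-\ell}x^{\ell+\alpha}\rd x = 1/(\alpha+\ell+1)$ together with the identity $(1-\alpha)\Gamma(1-\alpha)=\Gamma(2-\alpha)$. The only cosmetic difference is that you evaluate that integral as a Beta integral, whereas the paper exhibits an explicit antiderivative; the value and the rest of the bookkeeping agree.
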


Note that none of the quantities discussed in Proposition \ref{prop: for example} depend on the parameter $c$. Further, both $f_\xi$ and $\varphi_1$ do not depend on $\xi$. Now, we can simulate from $H$ as follows. First, we simulate $\xi$ from $\sigma_1$ by taking
\begin{eqnarray}\label{eq: xi example}
\xi = \left\{\begin{array}{rl}
-1 & \mbox {with probability } .5\\
1 & \mbox {with probability }  .5
\end{array}
\right..
\end{eqnarray}
Next, we simulate $W$ from $f_\xi$. We do this using Algorithm 1, in which we take $Y\sim \mathrm{MLL}(\alpha,1,1)$ and $\varphi_1$ as given above.

\subsection{Univariate Simulation Results}

We now present the results of our simulation study, where we focused on simulating increments from a TSOU-process with parameter $\lambda$ and limiting distribution $\mathrm{PT}_\alpha(\ell,c)$. For simplicity, throughout this section, we fix the parameters $\lambda=1$ and $c=1$. First, we consider simulation from $H$. Here we choose the parameter values $\alpha=0.55$, $\ell=1$, and $t=0.1$. Figure 1 gives a plot of the pdf $f_1$ (solid line) overlaid with a plot of the pdf of $\mathrm{MLL}(0.55,1,1)$ (dashed line). We then simulate $n=10^7$ observations from $\mathrm{MLL}(0.55,1,1)$, and, using Algorithm 1, we decide which observations should be rejected. We numerically calculated that $V_1=12.8837$ and thus we  expect to get $n/V_1=776175.4$ observations. We actually obtained $776528$ observations. We then assigned to each of these a positive or a negative sign based on \eqref{eq: xi example}. A plot of the kernel density estimate (KDE) of the pdf of the resulting observations is given in Figure 2. This is overlaid (dashed line) with the pdf of the true density of $X=\xi W$. 

\begin{figure}\label{fig: true pdf}
\linespread{.75}
\begin{tabular}{cc}
\includegraphics[trim={1.15cm 1cm 1cm .5cm},clip,scale=.3]{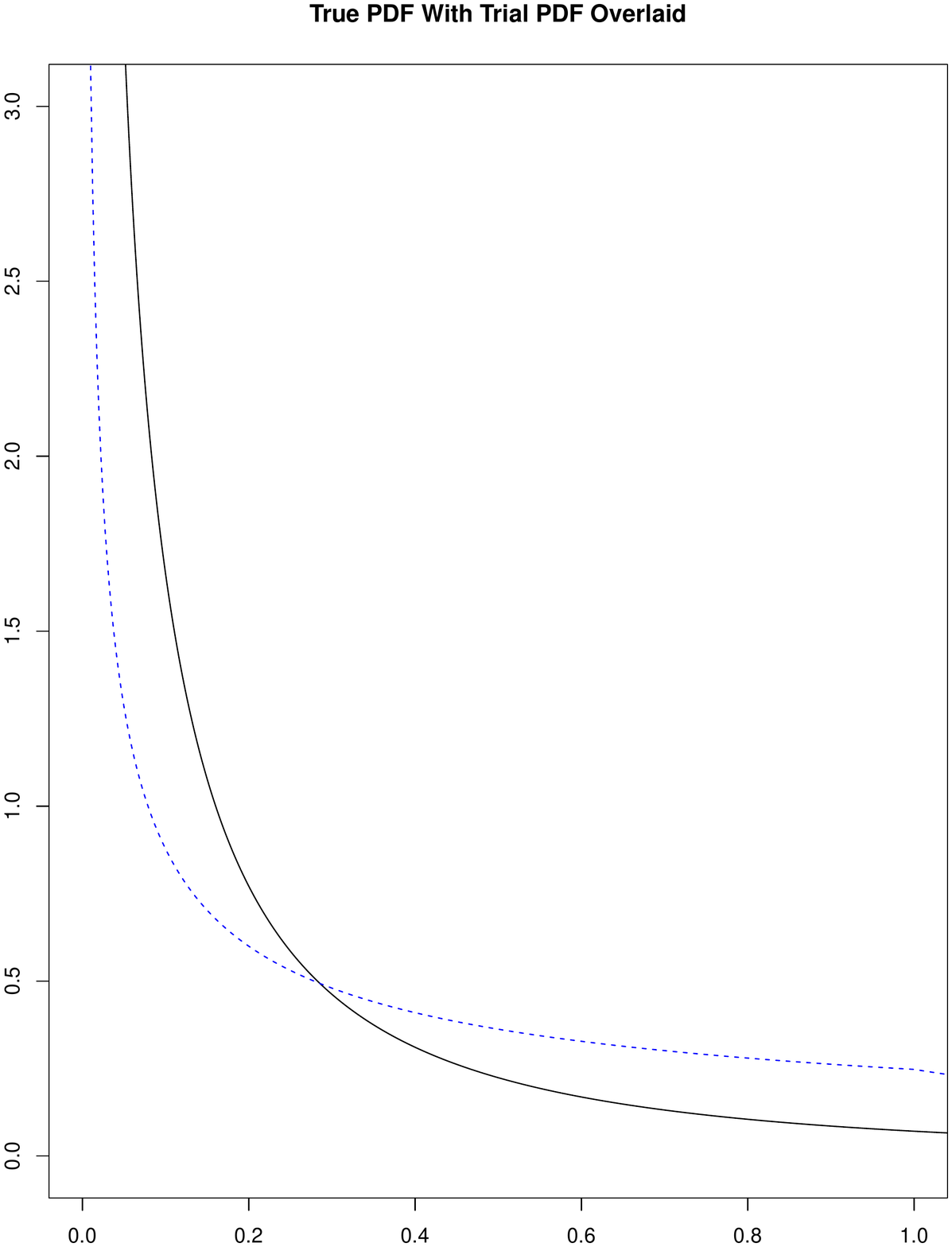} & \includegraphics[trim={1.15cm 1cm 1cm .5cm},clip,scale=.3]{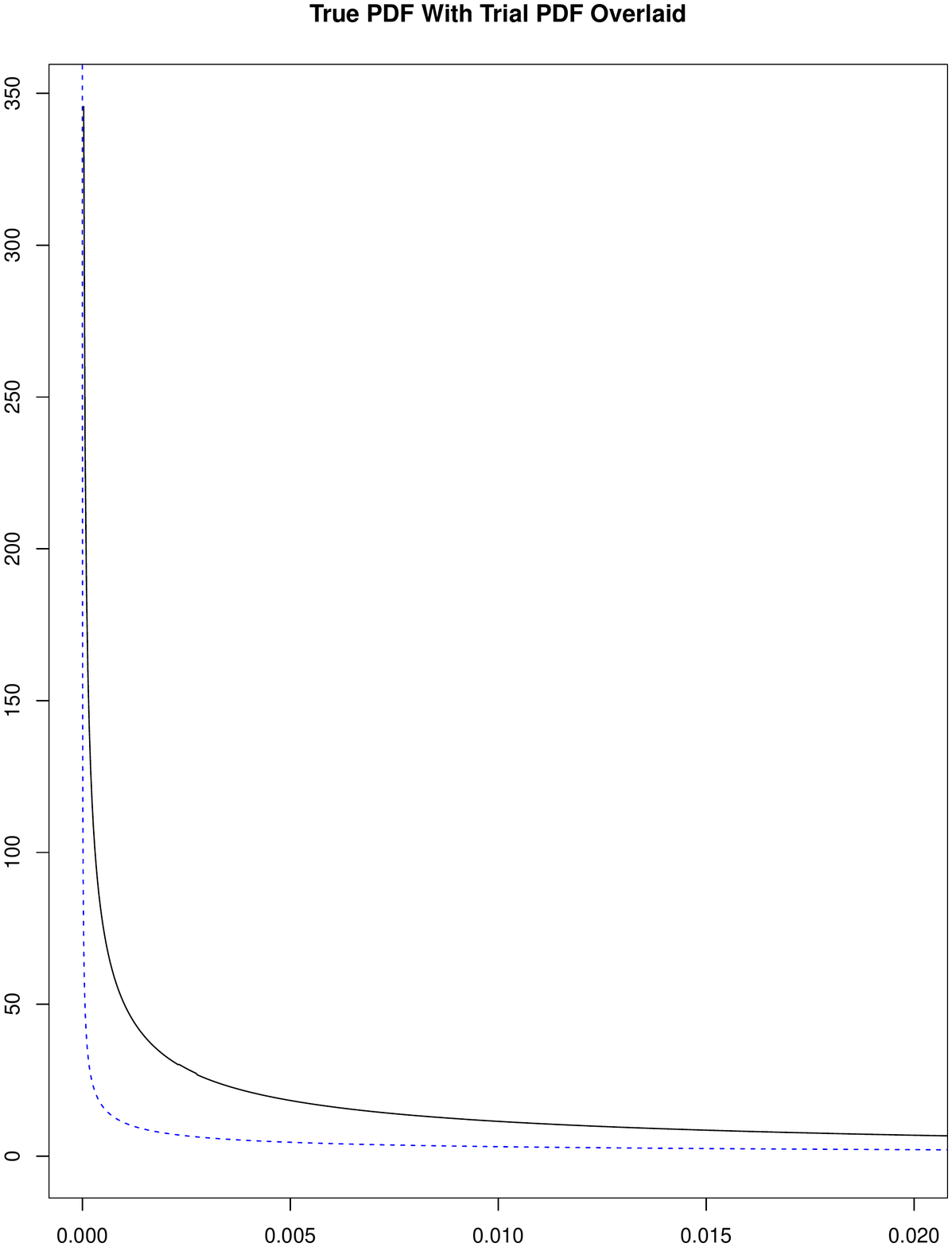} 
\end{tabular}
\vspace{-.5cm}\caption{For the parameter values $\alpha=0.55$, $t=0.1$ and $\ell=1$, we plot the pdf $f_1$ (solid line) with the pdf of $\mathrm{MLL}(0.55,1,1)$ (dashed line) overlaid. These are presented at two scales.}
\end{figure}

\begin{figure}\label{fig: estimated densities}
\linespread{.75}
\begin{tabular}{cc}
\includegraphics[trim={1.15cm 1cm 1cm .5cm},clip,scale=.3]{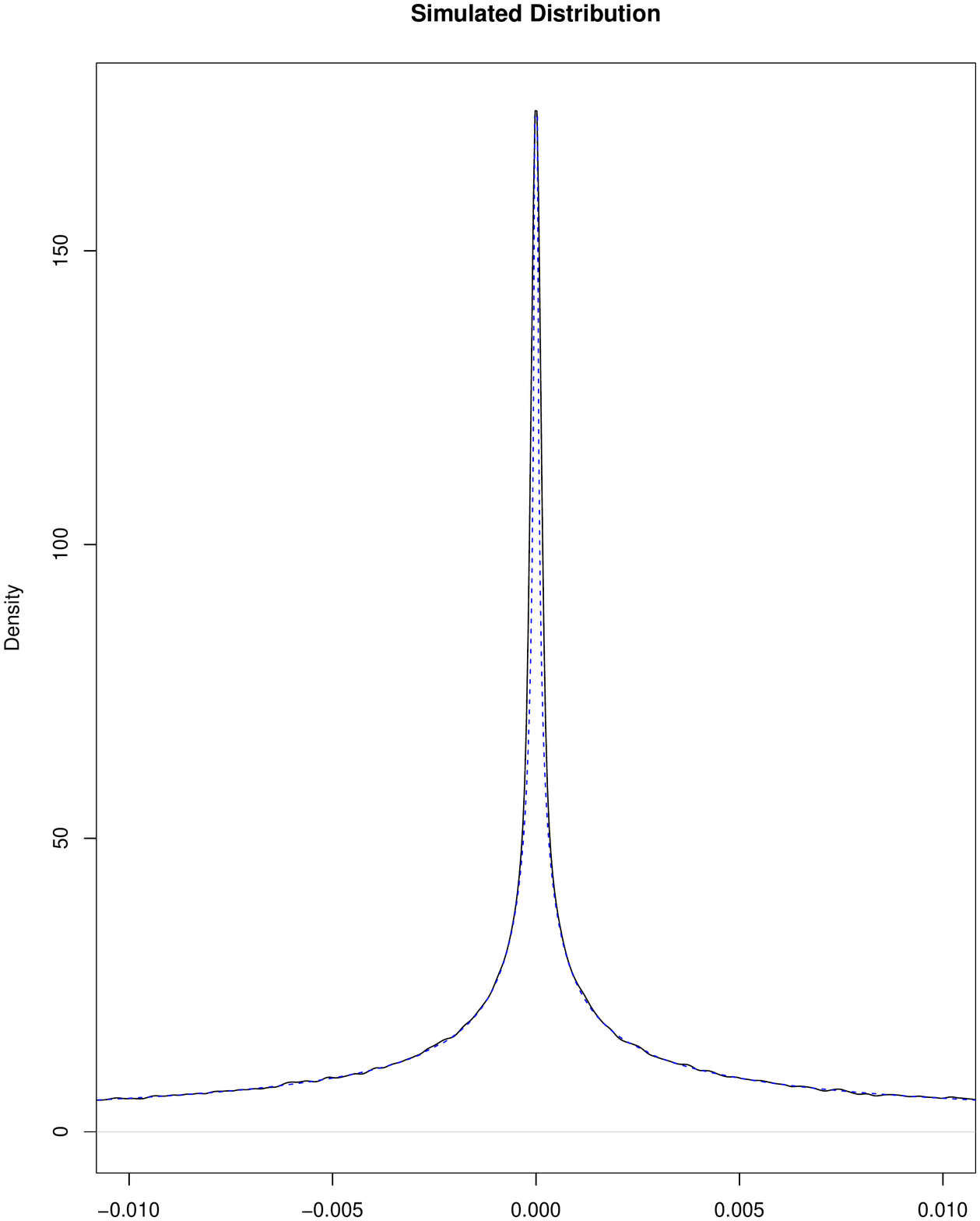}
& \includegraphics[trim={1.15cm 1cm 1cm .5cm},clip,scale=.3]{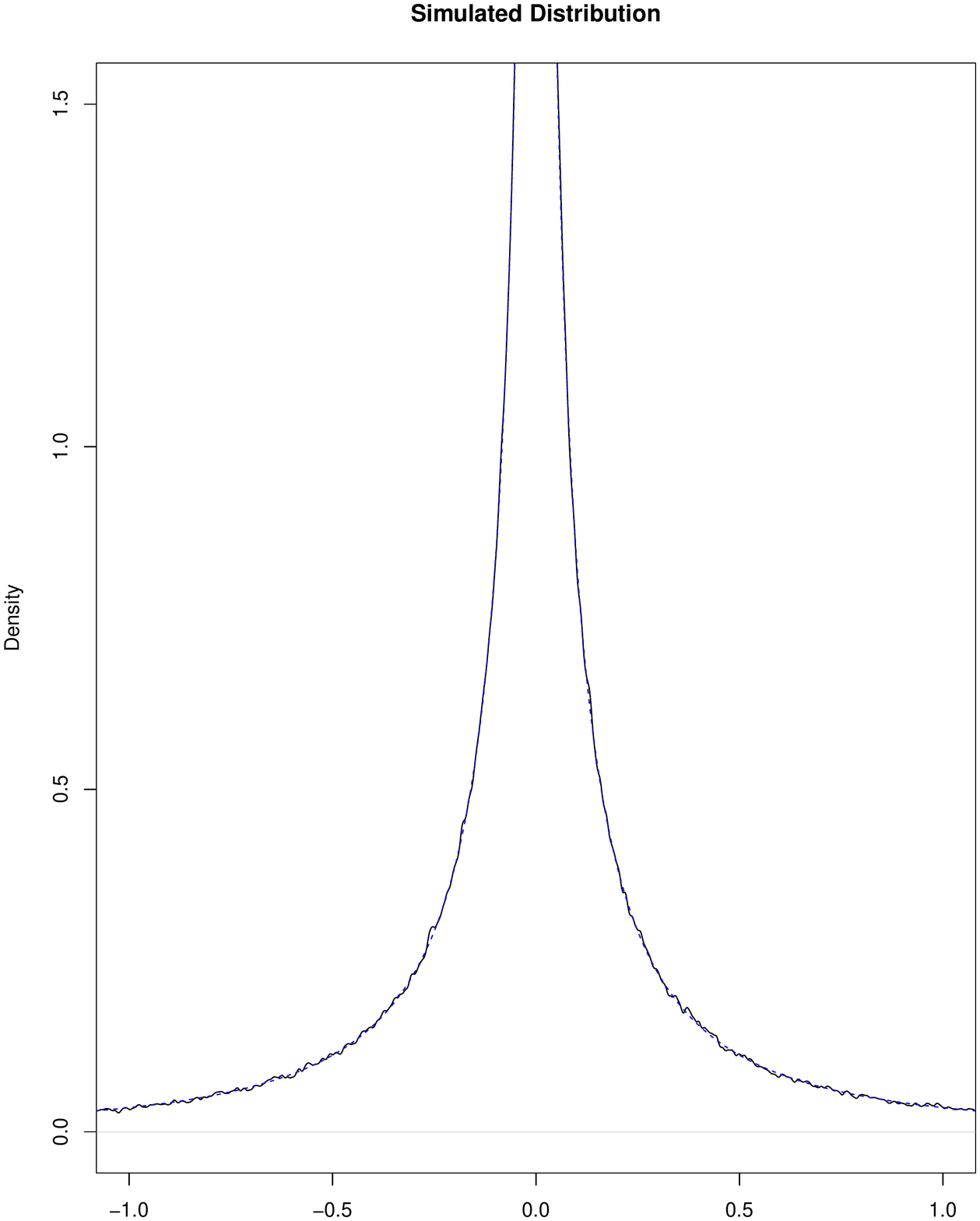}
\end{tabular}
\vspace{-.5cm}\caption{For the parameter values $\alpha=0.55$, $t=0.1$, and $\ell=1$, we used Algorithm 1 and \eqref{eq: xi example} to simulate $776528$ observations from  the distribution of $X=\xi W$. We plot the KDE of their density (solid line) with the true pdf overlaid (dashed line). These are presented at two scales. }
\end{figure}

We now turn to the simulation of TSOU-processes.  Figure 3 presents four simulated paths of the TSOU-process with limiting distribution $\mathrm{PT}_\alpha(\ell,1)$ for different choices of the parameters. The paths go from time $T=0$ to time $T=100$ in increments of $t=0.1$. Thus, each path consists of $1000$ increments. For simplicity, we start each path at $0$, which is the mean of the limiting distribution. To check that we really get the correct limiting distribution, we simulated $3000$ paths of the process for two choices of the parameters. We then took the final observation (at time $T=100$) from each of these. These are independent random variables from $\mathrm{PT}_\alpha(\ell,1)$. In Figure 4 we plot the KDE of their densities (solid line) with the true pdf of $\mathrm{PT}_\alpha(\ell,1)$ overlaid (dashed line).

\begin{figure}\label{fig: TSOU}
\linespread{.75}
\begin{tabular}{cc}
\includegraphics[trim={1.15cm 1cm 1cm .5cm},clip,scale=.3]{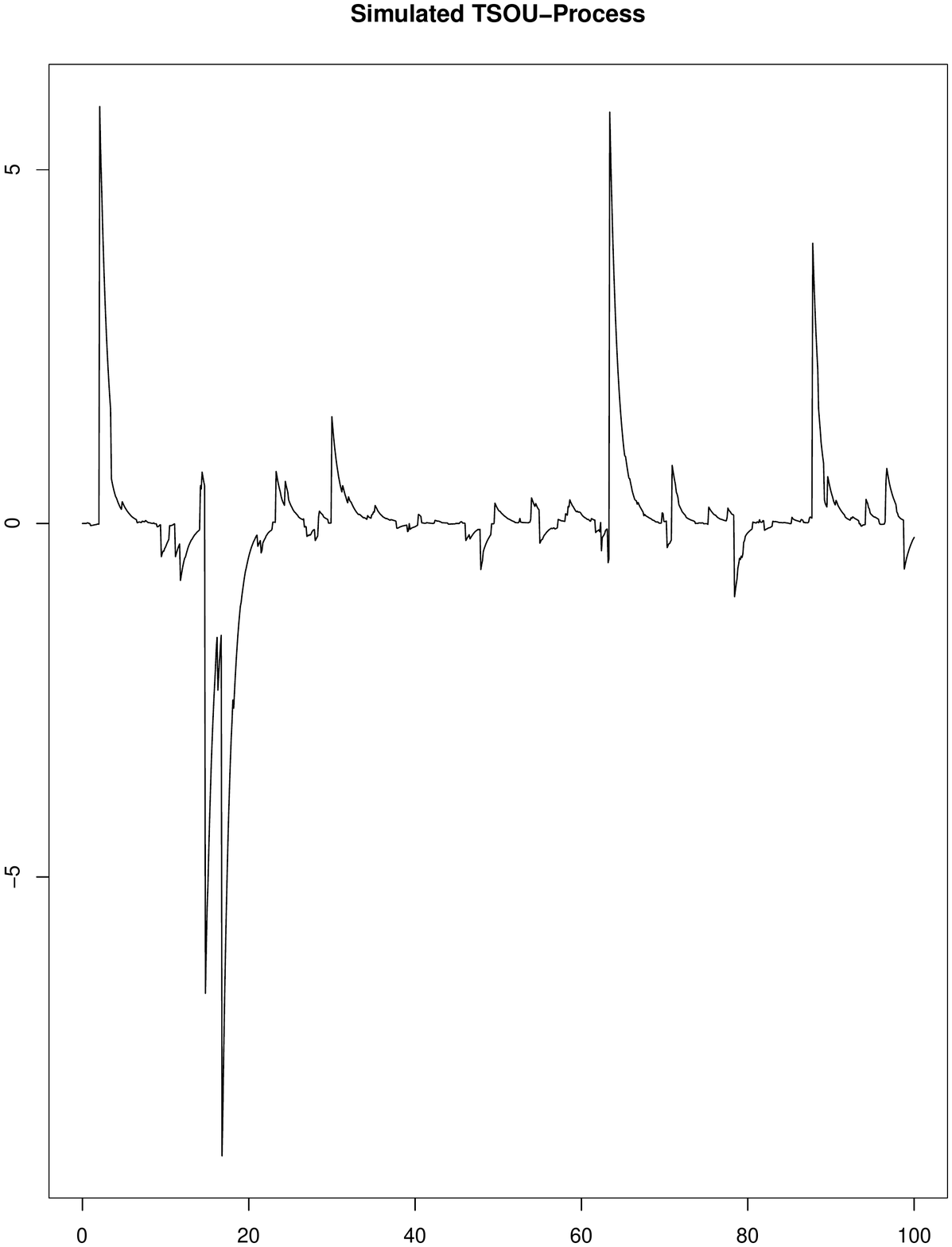} & \includegraphics[trim={1.15cm 1cm 1cm .5cm},clip,scale=.3]{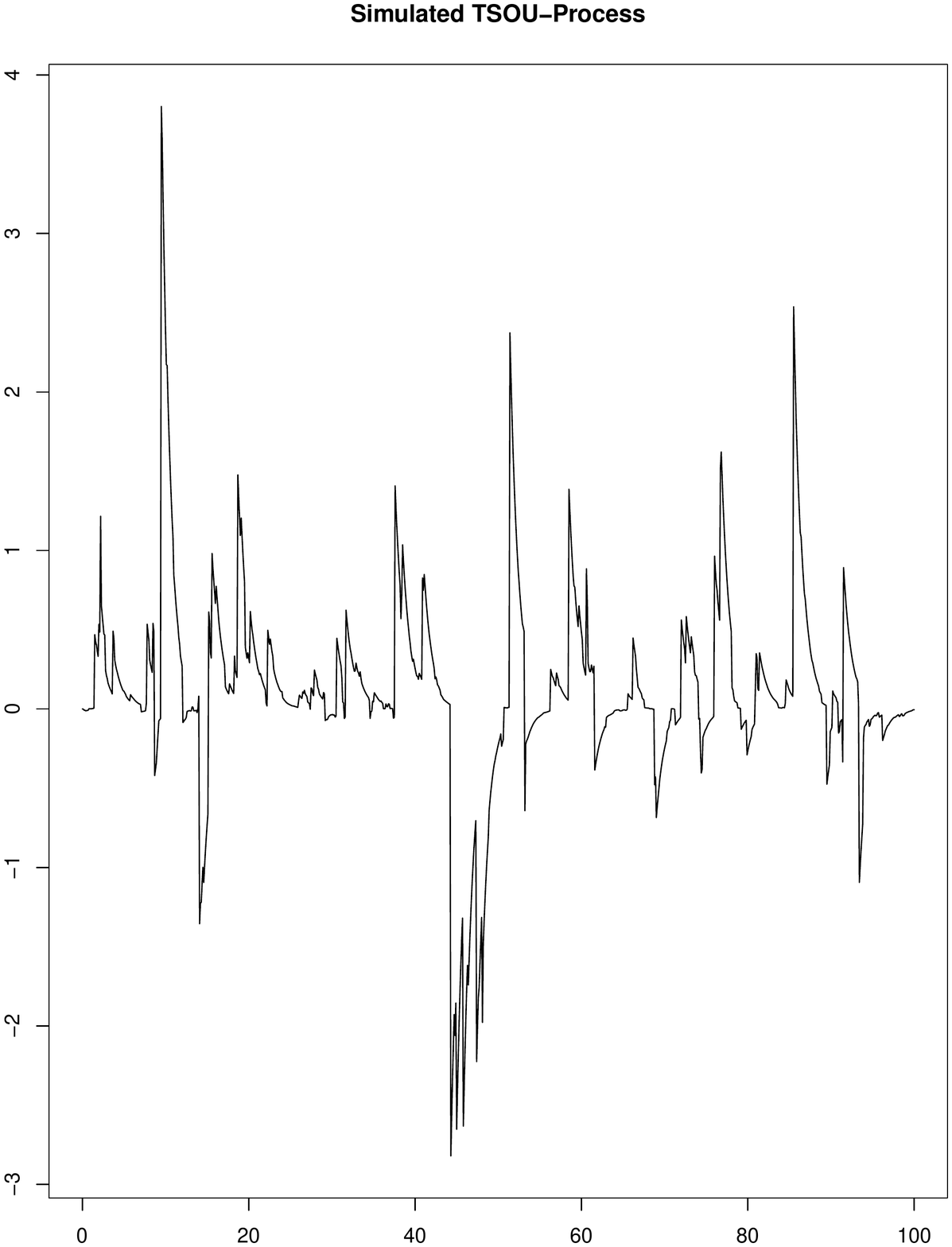} \\
$\alpha=.55,\ell=1$ & $\alpha=.55,\ell=10$ \vspace{.4cm}\\
\includegraphics[trim={1.15cm 1cm 1cm .5cm},clip,scale=.3]{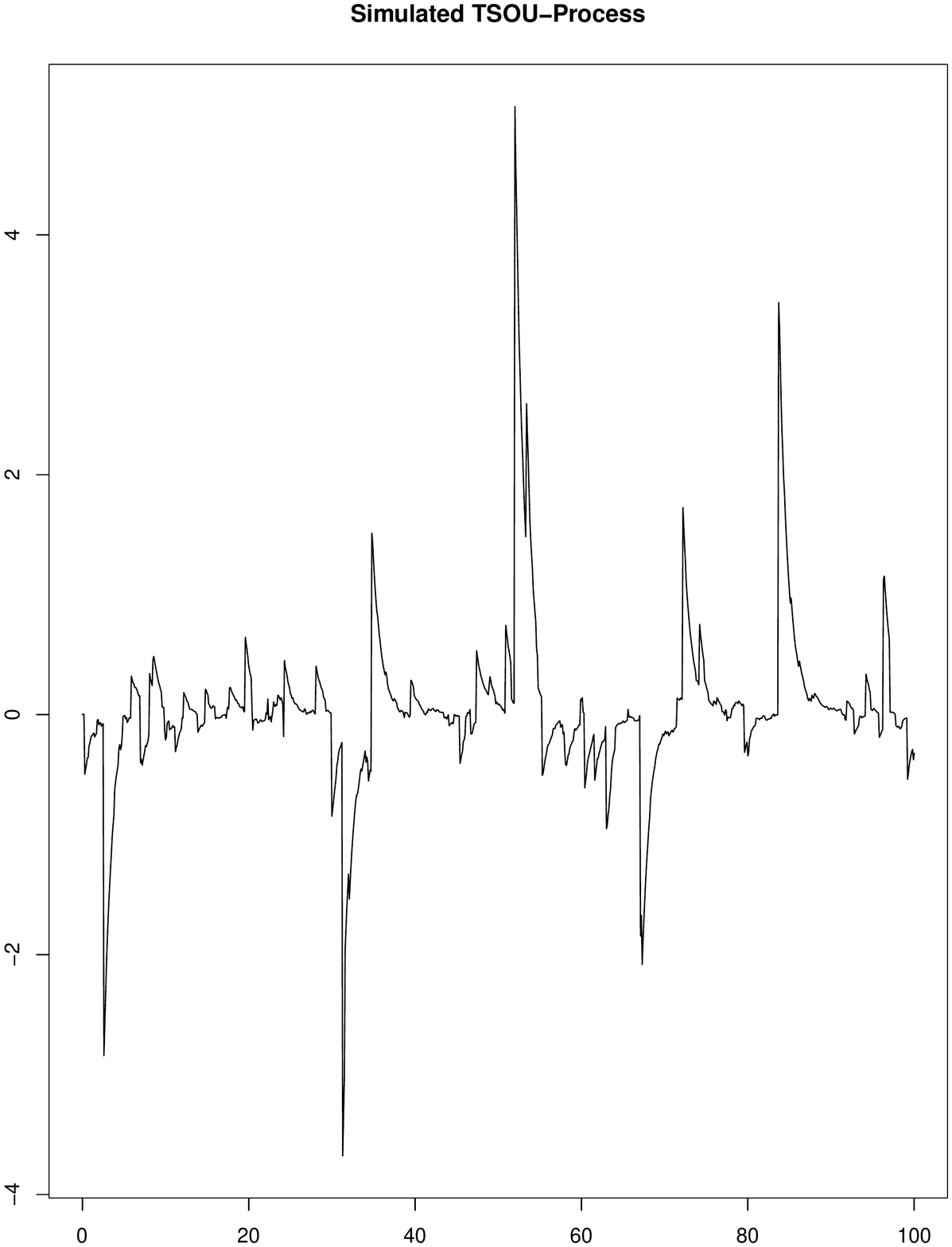} & \includegraphics[trim={1.15cm 1cm 1cm .5cm},clip,scale=.3]{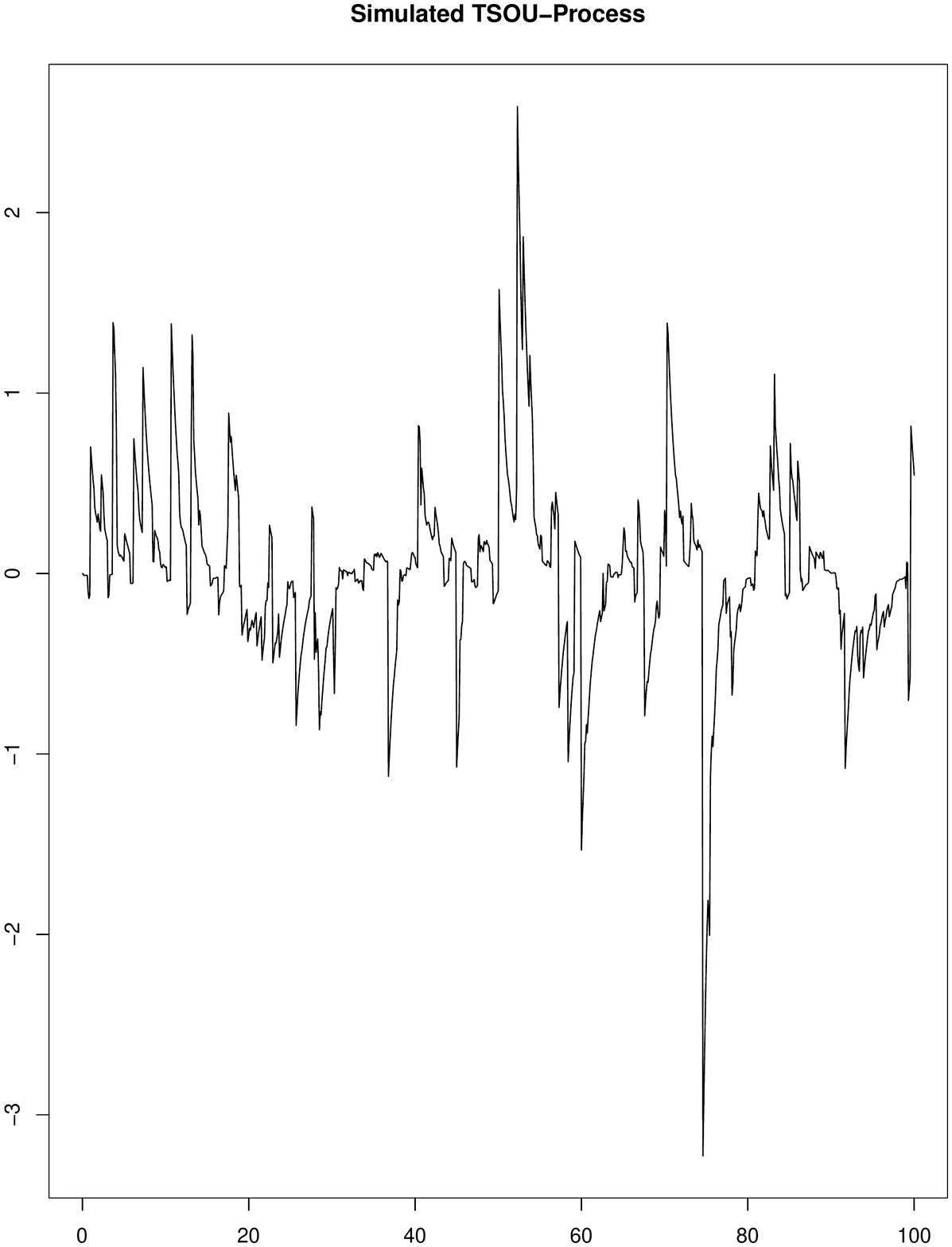} \\
$\alpha=.75,\ell=1$ & $\alpha=.75,\ell=10$\vspace{.4cm}
\end{tabular}
\vspace{-.5cm}\caption{Simulated TSOU-processes for several choices of the parameters. In all cases the simulated increments were of length $t=0.1$.}
\end{figure}

\begin{figure}\label{fig: limit}
\linespread{.75}
\begin{tabular}{cc}
\includegraphics[trim={1.15cm 1cm 1cm .5cm},clip,scale=.375]{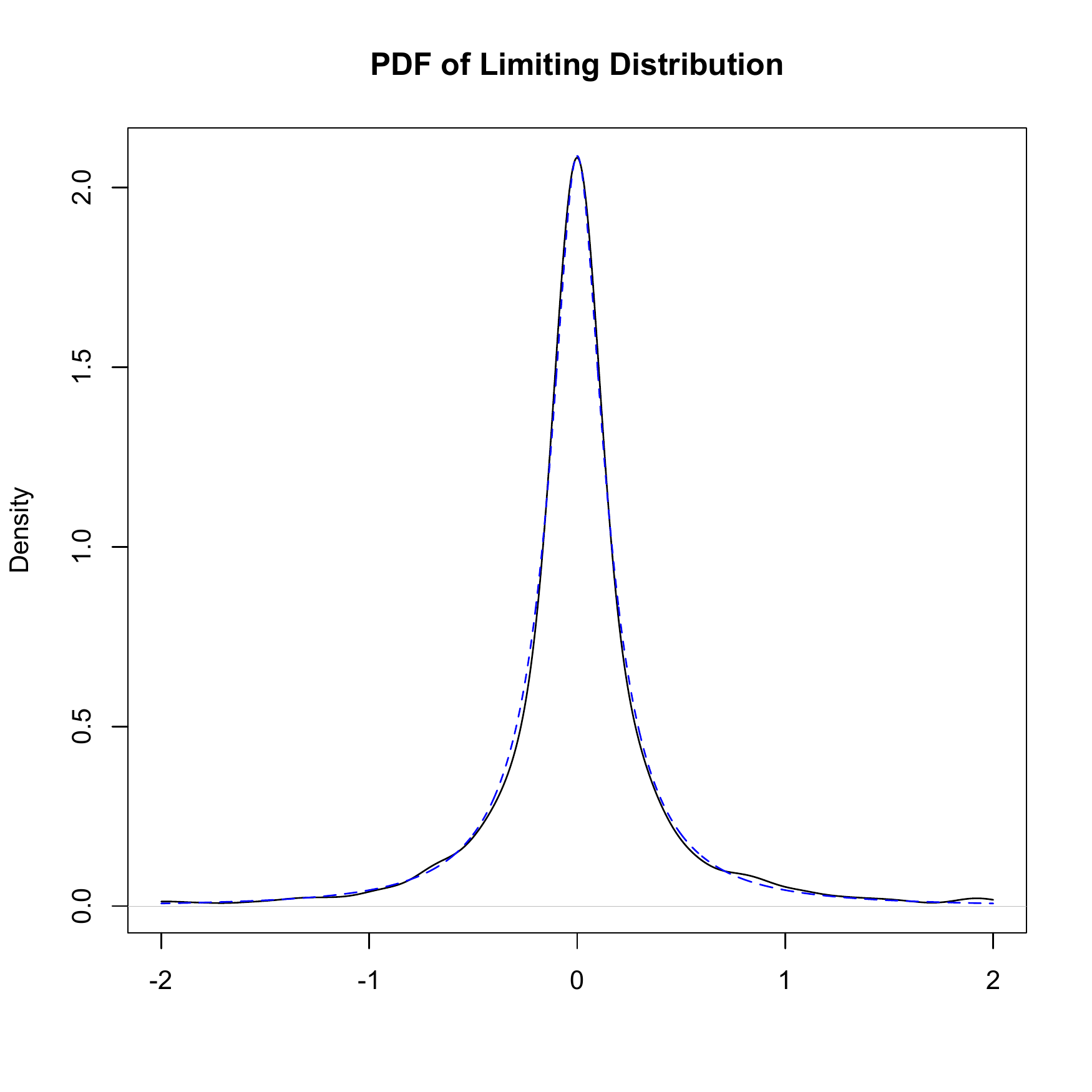} & \includegraphics[trim={1.15cm 1cm 1cm .5cm},clip,scale=.375]{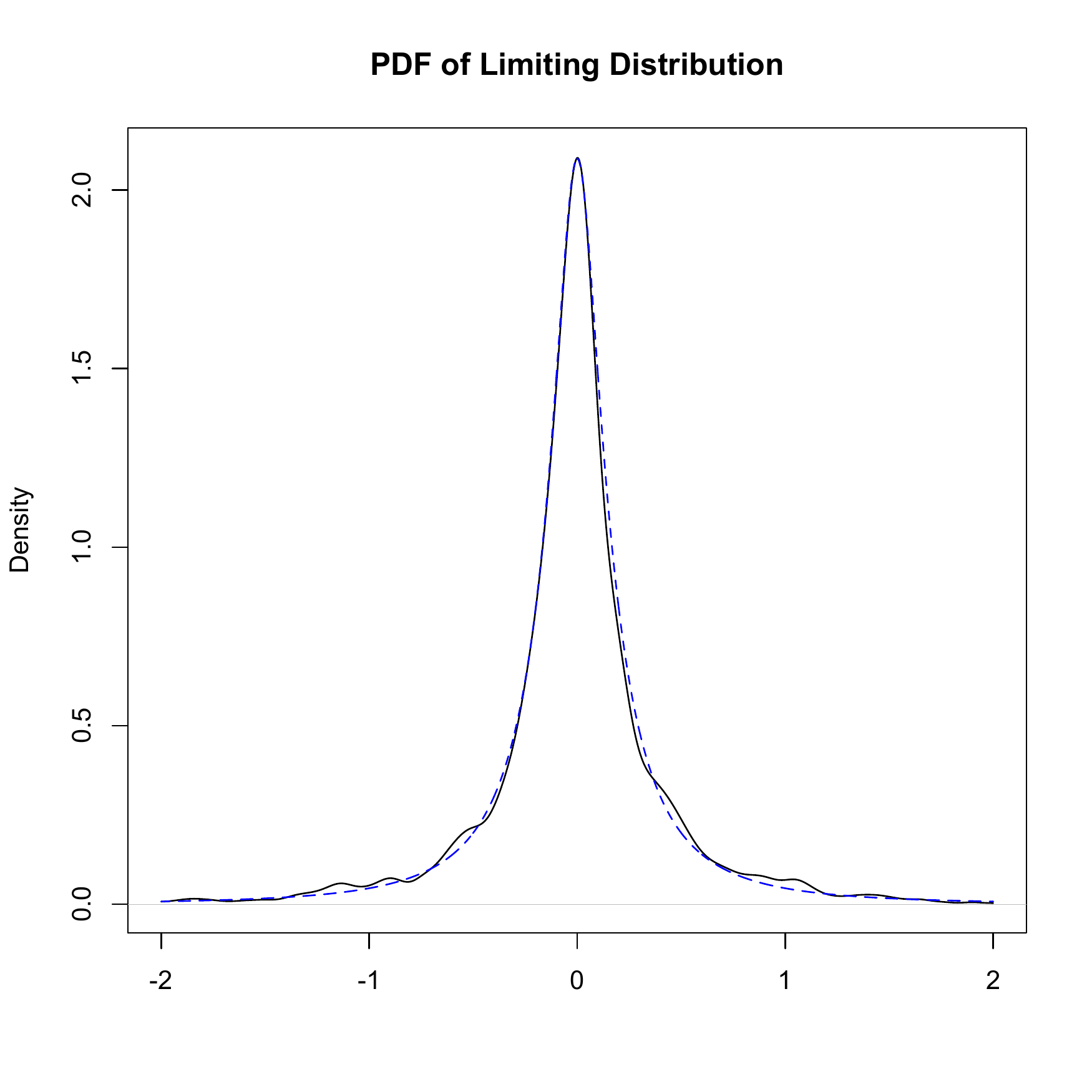}\\
$\alpha=.55,\ell=1$ & $\alpha=.75,\ell=1$ \vspace{.4cm}
\end{tabular}
\vspace{-.5cm}\caption{We simulated $3000$ TSOU-processes (with the same parameters) up to time $T=100$ in increments of $t=0.1$. We then consider the last observation for each process. We plot the KDE these observations (solid line) overlaid with the true pdf of the limiting distribution $\mathrm{PT}_\alpha(\ell,1)$ (dashed line).}
\end{figure}

\subsection{Multivariate Simulation Results}

In this section we give simulations for the multivariate case. We begin by introducing a multivariate extension of the power tempered stable distribution with a finite measure $\sigma$. Let $d\ge2$ be the dimension, let $s_1, s_2,\dots, s_k\in\mathbb S^{d-1}$ with $s_i\ne s_j$ and $s_i\ne -s_j$ for $i\ne j$, and let
\begin{eqnarray}\label{eq: lin comb PT}
X = \sum_{j=1}^k X_i s_j,
\end{eqnarray}
where $X_1,X_2,\dots,X_k\iid \mathrm{PT}_\alpha(\ell,c)$. It is not difficult to show that $X\sim \mathrm{TS}^1_{\alpha,d}(R,0)$ with
 $$
 R(B) = \int_{\mathbb S^{d-1}}\int_0^\infty 1_B(u \xi) (u+1)^{-2-\alpha-\ell} \rd u \sigma(\rd \xi), \ \ B\in\mathcal B(\mathbb R^d),
  $$
 where
$$
 \sigma(\rd \xi) = \sum_{s\in S} \delta_{s}(\rd\xi)
 $$
 and $S=\{\pm s_1,\pm s_2,\dots,\pm s_k\}$.  We denote this distribution by $\mathrm{PT}_{\alpha,d,k}(\ell,c,S)$. Further, we can show that, in this case,
$$
\sigma_1(\rd \xi) = \frac{1}{2k} \sum_{s\in S} \delta_{s}(\rd\xi),
$$
and that, for $\xi\in S$, $Q_\xi$ is given by \eqref{eq: Q xi for power} and $f_\xi$ is given by \eqref{eq: f xi for power}.

We are interested in simulating a multivariate TSOU-process with parameter $\lambda>0$ and limiting distribution $\mathrm{PT}_{\alpha,d,k}(\ell,c,S)$. Proposition \ref{prop: finite K} implies that this distribution belongs to Class F and that we can use the representation of the transition law given in \eqref{eq: trans RV}. To use this representation, we need to simulate from $\mathrm{TS}^1_{\alpha,d}(R_0,0)$ and $H(\rd \xi,\rd u)$. It is readily checked that $\mathrm{TS}^1_{\alpha,d}(R_0,0)=\mathrm{PT}_{\alpha,d,k}(\ell,\left(1-e^{-\alpha\lambda t}\right)c,S)$ and (since we already know how to simulate from $\mathrm{PT}_\alpha(\ell,\left(1-e^{-\alpha\lambda t}\right)c)$) we can use \eqref{eq: lin comb PT} to simulate from this distribution. To simulate from $H$, we first simulate $\xi$ uniformly from the finite set $S$, then we simulate $W$ from $f_\xi$.  For simplicity, in our simulations, we take $d=2$, $k=3$, and we fix the parameters $\lambda=1$, $c=1$, $\ell=1$, and $\alpha=0.55$. We simulated two TSOU-processes with different choices of $S$.  As in the univariate case, the paths go from time $T=0$ to time $T=100$ in increments of $t=0.1$. For simplicity we start each path at $\left({0\atop0}\right)$, which is the mean of the limiting distribution.  The plots of the $x$ and $y$ coordinates of the two processes are given in Figure 5.

\begin{figure}\label{fig: TSOU Multivariate}
\linespread{.75}
\begin{tabular}{cc}
\includegraphics[trim={1.15cm 1cm 1cm .5cm},clip,scale=.375]{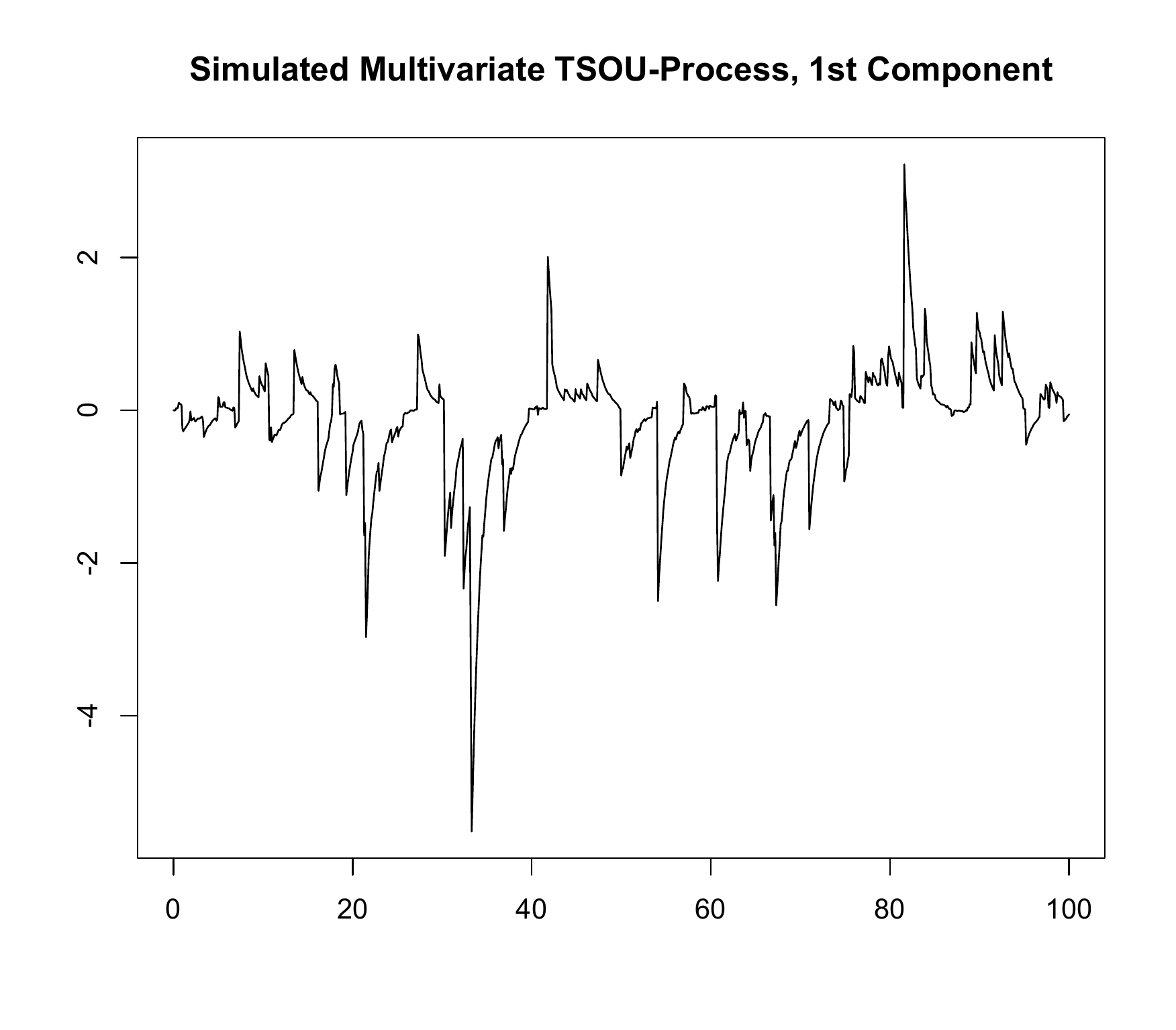} & \includegraphics[trim={1.15cm 1cm 1cm .5cm},clip,scale=.375]{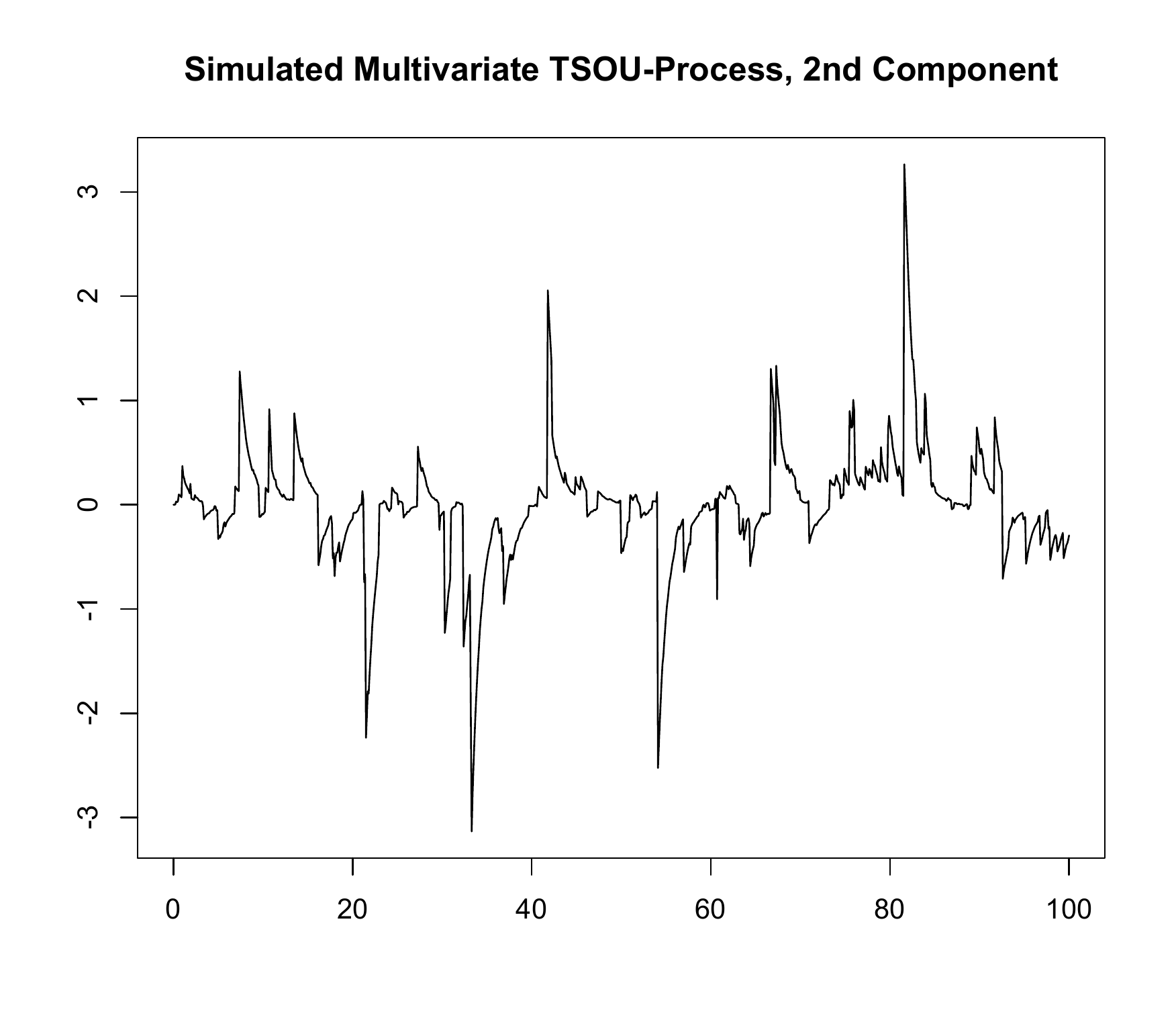} \\
\multicolumn{2}{c}{$s_1 =\left( {\sqrt 2/2\atop \sqrt 2/2}\right)$, $s_2=\left({\sqrt 3/2 \atop 1/2}\right)$, $s_3 = \left({-\sqrt 2/2\atop \sqrt 2/2}\right)$} \vspace{.4cm}\\
\includegraphics[trim={1.15cm 1cm 1cm .5cm},clip,scale=.375]{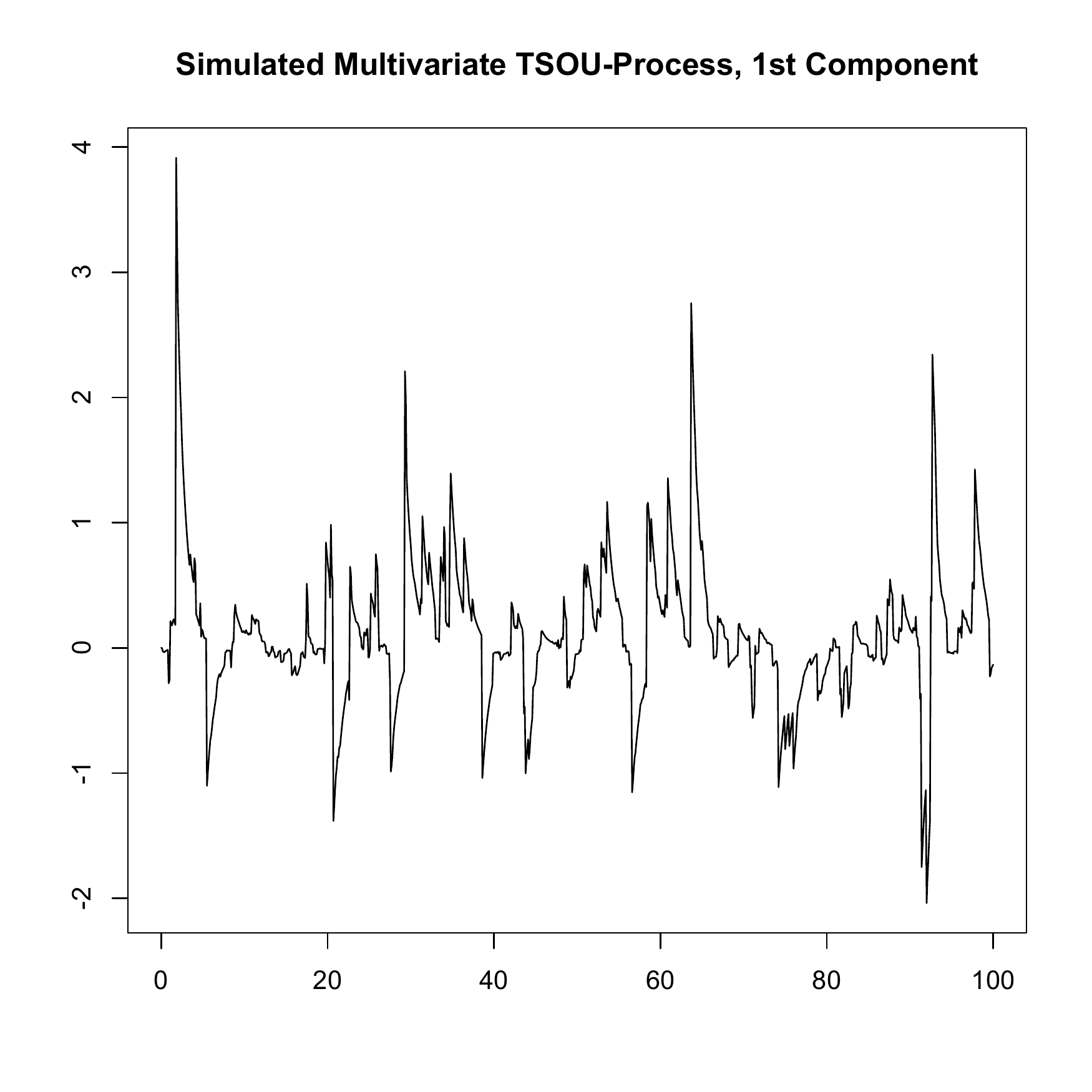} & \includegraphics[trim={1.15cm 1cm 1cm .5cm},clip,scale=.375]{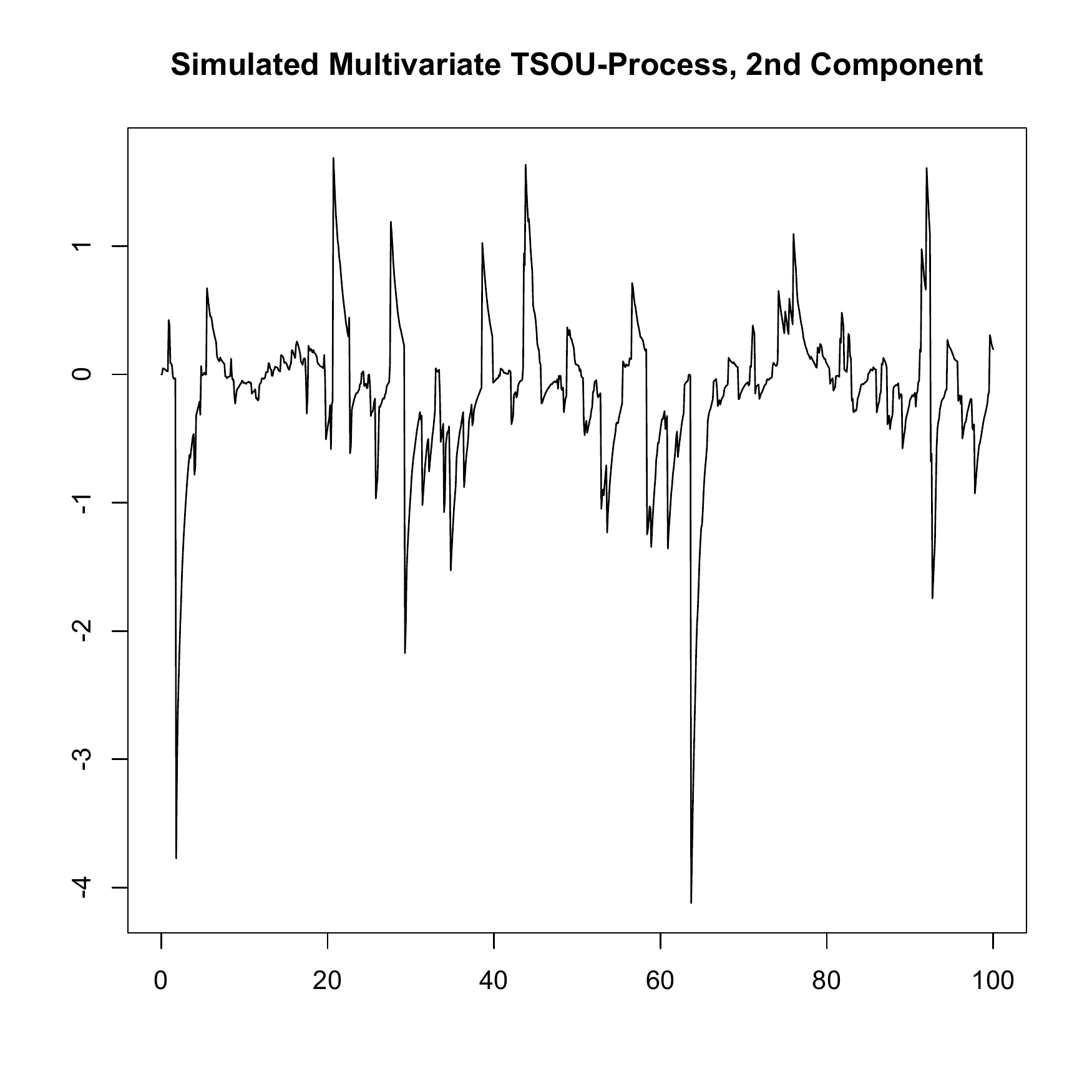} \\
\multicolumn{2}{c}{$s_1 =\left( {\sqrt 2/2\atop -\sqrt 2/2}\right)$, $s_2=\left({\sqrt 3/2 \atop -1/2}\right)$, $s_3 = \left({2/\sqrt{13} \atop -3/\sqrt{13}}\right)$} \vspace{.4cm}\\
\end{tabular}
\vspace{-.5cm}\caption{Simulated multivariate TSOU-processes for two choices of $S$. On the left is the $x$-coordinate of the process and on the right is the $y$-coordinate.}
\end{figure}

\section{Proofs}\label{sec: proofs}

We begin with several lemmas.

\begin{lemma}\label{lemma: moments of Q}
Let $Q$ be given by \eqref{eq: Q} and $R$ by \eqref{eq: R}. For any $\gamma>0$, we have
\begin{eqnarray*}
\int_{\mathbb R^d} |x|^{\alpha-\gamma p} R(\rd x) <\infty
\end{eqnarray*}
if and only if 
\begin{eqnarray*}
\int_{\mathbb S^{d-1}} \int_{(0,\infty)} s^\gamma Q_\xi(\rd s) \sigma(\rd \xi) <\infty.
\end{eqnarray*}
\end{lemma}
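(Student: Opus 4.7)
The plan is to reduce the claim to a single change-of-variables identity. From the definition of $R$ in \eqref{eq: R}, for any nonnegative Borel function $f$ on $\mathbb R^d$ one has
\begin{eqnarray*}
\int_{\mathbb R^d} f(y)\, R(\rd y) = \int_{\mathbb R^d} f\!\left(\frac{x}{|x|^{1+1/p}}\right) |x|^{\alpha/p}\, Q(\rd x),
\end{eqnarray*}
obtained by approximating $f$ by simple functions and applying \eqref{eq: R} to each indicator. I would apply this with $f(y) = |y|^{\alpha - \gamma p}$. Since $\left|x/|x|^{1+1/p}\right| = |x|^{-1/p}$, the integrand on the right becomes
\begin{eqnarray*}
|x|^{-(\alpha-\gamma p)/p}\,|x|^{\alpha/p} = |x|^{\gamma},
\end{eqnarray*}
so the key identity is
\begin{eqnarray*}
\int_{\mathbb R^d} |y|^{\alpha-\gamma p}\, R(\rd y) = \int_{\mathbb R^d} |x|^{\gamma}\, Q(\rd x).
\end{eqnarray*}

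Next I would unwind the polar-type representation of $Q$ given in \eqref{eq: Q}. Since $|s\xi| = s$ for $s>0$ and $\xi\in\mathbb S^{d-1}$, Fubini/Tonelli gives
\begin{eqnarray*}
\int_{\mathbb R^d} |x|^{\gamma}\, Q(\rd x) = \int_{\mathbb S^{d-1}}\int_{(0,\infty)} s^{\gamma}\, Q_\xi(\rd s)\, \sigma(\rd \xi).
\end{eqnarray*}
Combining the two displays yields the equality of the two integrals in the statement, and the equivalence (both finite or both infinite) follows immediately.

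The argument is essentially one change of variables together with one Fubini step, so there is no real obstacle beyond bookkeeping; the only point requiring care is verifying the exponent arithmetic $-(\alpha-\gamma p)/p + \alpha/p = \gamma$, which is why the particular form $\alpha - \gamma p$ was chosen in the statement. Assumption-wise, everything goes through for nonnegative integrands, so no integrability hypotheses are needed to justify the manipulations.
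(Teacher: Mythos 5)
Your proposal is correct and follows essentially the same route as the paper: the paper's proof consists precisely of the identity $\int_{\mathbb R^d}|x|^{\alpha-\gamma p}R(\rd x)=\int_{\mathbb R^d}|x|^{\gamma}Q(\rd x)=\int_{\mathbb S^{d-1}}\int_{(0,\infty)}s^{\gamma}Q_\xi(\rd s)\sigma(\rd\xi)$, stated without detail, and your change of variables from \eqref{eq: R} together with the polar form \eqref{eq: Q} is exactly the justification of that identity.
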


\begin{proof}
The fact that
$$
\int_{\mathbb R^d} |x|^{\alpha-\gamma p} R(\rd x) = \int_{\mathbb R^d} |x|^\gamma Q(\rd x) = \int_{\mathbb S^{d-1}} \int_{(0,\infty)}s^\gamma Q_\xi(\rd s)\sigma(\rd\xi)
$$
gives the result.
\end{proof}

\begin{lemma}\label{lemma: exp fact}
If $0\le a<b<\infty$ then
$$
0< e^{-b}(b-a) \le e^{-a} - e^{-b}\le e^{-a}(b-a)\le(b-a).
$$
\end{lemma}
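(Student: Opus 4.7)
The plan is to represent the middle quantity $e^{-a}-e^{-b}$ as an integral and then bound the integrand by its endpoint values, exploiting monotonicity of $e^{-x}$.

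First I would write
\begin{eqnarray*}
e^{-a} - e^{-b} = \int_a^b e^{-x}\,\rd x.
\end{eqnarray*}
Since $a < b$ and $e^{-x}$ is strictly decreasing on $[a,b]$, we have $e^{-b} \le e^{-x} \le e^{-a}$ for every $x \in [a,b]$. Integrating these inequalities over $[a,b]$ (whose length is $b-a>0$) yields simultaneously
\begin{eqnarray*}
e^{-b}(b-a) \;\le\; e^{-a}-e^{-b} \;\le\; e^{-a}(b-a).
\end{eqnarray*}

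For the two remaining inequalities: $0 < e^{-b}(b-a)$ is immediate from $e^{-b} > 0$ and $b-a>0$; and $e^{-a}(b-a) \le b-a$ follows from the fact that $a \ge 0$ implies $e^{-a} \le 1$, combined with $b-a\ge 0$. Chaining these four inequalities gives the stated result.

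There is no real obstacle here; the lemma is a direct consequence of the mean value theorem (or equivalently the monotonicity of $e^{-x}$ applied under the integral). The only thing worth being careful about is that the leftmost inequality is strict, which requires $a<b$ (not merely $a \le b$) so that $b-a>0$.
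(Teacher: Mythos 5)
Your proof is correct and follows essentially the same route as the paper, which also reduces the lemma to the integral representation $e^{-a}-e^{-b}=\int_a^b e^{-u}\,\rd u$ and the monotonicity of $e^{-x}$; you simply spell out the endpoint bounds and the outer inequalities that the paper leaves implicit. No issues.
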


\begin{proof}
The result follows from the fact that $e^{-a} - e^{-b} = \int_a^b e^{-u}\rd u$.
\end{proof}

\begin{lemma}\label{lemma: integ diff}
For any $\kappa>0$ and $0<\alpha<p$
$$
 \int_0^\infty \left(e^{-u^p}-e^{-u^{p}\kappa}\right) u^{-1-\alpha} \rd u = \alpha^{-1}\Gamma(1-\alpha/p)\left(\kappa^{\alpha/p}-1\right).
 $$
\end{lemma}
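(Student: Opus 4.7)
The integrand is integrable near $0$ because $e^{-u^p}-e^{-u^p\kappa}$ vanishes like a constant times $u^p$ as $u\to 0$ (so the singularity of $u^{-1-\alpha}$ is tamed once we note $p>\alpha$), and it decays exponentially at infinity. My plan is to reduce to the Gamma integral via a swap-of-order argument. Specifically, I would write
\[
e^{-u^p}-e^{-u^p\kappa}=\int_{u^p}^{u^p\kappa} e^{-t}\,\rd t
\]
(a signed integral when $\kappa<1$; the case $\kappa=1$ is trivial and it suffices to treat $\kappa>1$, with $\kappa<1$ following from the same computation with roles reversed or from analytic continuation in $\kappa$).

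Assuming $\kappa>1$ so the integrand is nonnegative, Tonelli's theorem lets me swap the order of integration:
\[
\int_0^\infty\!\!\int_{u^p}^{u^p\kappa} e^{-t}\,\rd t \; u^{-1-\alpha}\,\rd u
\;=\; \int_0^\infty e^{-t}\!\int_{(t/\kappa)^{1/p}}^{t^{1/p}} u^{-1-\alpha}\,\rd u\,\rd t,
\]
where the new $u$-range follows from solving $u^p\le t\le u^p\kappa$. The inner integral is elementary:
\[
\int_{(t/\kappa)^{1/p}}^{t^{1/p}} u^{-1-\alpha}\,\rd u
= \frac{1}{\alpha}\bigl((t/\kappa)^{-\alpha/p}-t^{-\alpha/p}\bigr)
= \frac{\kappa^{\alpha/p}-1}{\alpha}\,t^{-\alpha/p}.
\]

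Substituting back, the outer integral becomes
\[
\frac{\kappa^{\alpha/p}-1}{\alpha}\int_0^\infty e^{-t}t^{-\alpha/p}\,\rd t
= \frac{\kappa^{\alpha/p}-1}{\alpha}\,\Gamma(1-\alpha/p),
\]
where convergence of the Gamma integral uses $0<\alpha/p<1$. This is exactly the claimed identity. The main (minor) obstacle is bookkeeping for the case $\kappa<1$, but this either follows by symmetry (both sides flip sign) or by writing the bounds of the $t$-integral as $[u^p\kappa,u^p]$ and repeating the computation; alternatively, since both sides of the identity are analytic in $\kappa>0$ and agree on $\kappa>1$, they agree everywhere. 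An equally clean alternative would be the substitution $v=u^p$ followed by integration by parts against $v^{-\alpha/p}/(-\alpha/p)$, using that the boundary term at $v=0$ vanishes like $v^{1-\alpha/p}$; both approaches avoid splitting the integrand into two non-integrable pieces.
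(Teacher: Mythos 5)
Your proof is correct, and it takes a genuinely different route from the paper. The paper substitutes $v=u^p$ to reduce the integral to $p^{-1}\int_0^\infty\left(e^{-v}-e^{-v\kappa}\right)v^{-\alpha/p-1}\,\rd v$ and then simply cites a known identity (formula (3.38) in \cite{Grabchak:2016book}), expressing the answer first through $\Gamma(-\alpha/p)$ and then using the recursion $\Gamma(1-s)=-s\,\Gamma(-s)$. You instead give a self-contained derivation: writing $e^{-u^p}-e^{-u^p\kappa}=\int_{u^p}^{u^p\kappa}e^{-t}\,\rd t$ and swapping the order of integration by Tonelli (legitimate for $\kappa>1$ since the integrand is nonnegative, with finiteness coming out of the computation), you land directly on $\Gamma(1-\alpha/p)$ with no need for values of the Gamma function at negative arguments or any external reference. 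Your bookkeeping is right: the inner $u$-integral evaluates to $\alpha^{-1}t^{-\alpha/p}\left(\kappa^{\alpha/p}-1\right)$, and the remaining $t$-integral converges because $0<\alpha/p<1$. The treatment of $\kappa<1$ by symmetry (or analytic continuation) is fine, and in fact only $\kappa=e^{p\lambda t}>1$ is ever needed in the paper's applications. The trade-off: the paper's proof is shorter but leans on a formula from the monograph, while yours is elementary and verifiable in place; your integration-by-parts alternative would likewise work and is essentially how the cited identity is proved.
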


\begin{proof}
We have
 \begin{eqnarray*}
 \int_0^\infty \left(e^{-u^p}-e^{-u^{p}\kappa}\right) u^{-1-\alpha} \rd u &=&p^{-1} \int_0^\infty\left(e^{-v}-e^{-v\kappa}\right) v^{-\alpha/p-1}\rd v \\
&=& p^{-1}\Gamma(-\alpha/p)\left(1-\kappa^{\alpha/p}\right)\\
&=& \alpha^{-1}\Gamma(1-\alpha/p)\left(\kappa^{\alpha/p}-1\right),
\end{eqnarray*}
where the second line follows by (3.38) in \cite{Grabchak:2016book}.
\end{proof}

\begin{proof}[Proof of Theorem \ref{thrm: trans char func}]
From Lemma 17.1 in \cite{Sato:1999} it follows that $Y$ is a Markov process with temporally homogenous transition function $P_t(y,\rd x)$ on $\mathbb R^d$ having a characteristic function given by
$$
\exp\left\{ie^{-\lambda t}\langle y,z\rangle + i \left(1-e^{-\lambda t}\right) \langle  c_\alpha,z\rangle +\lambda\int_0^t \int_{\mathbb R^d}\psi_\alpha(e^{-\lambda s}z,x)M(\rd x)\rd s\right\}.
$$ 
We begin with the case $\alpha\ne1$. In this case it is easily checked that, for $a\in\mathbb R$, we have $\psi_\alpha(az,x)=\psi_\alpha(z,ax)$. It follows that
\begin{eqnarray*}
&&\lambda \int_0^t \int_{\mathbb R^d}\psi_\alpha(e^{-\lambda s}z,x) M(\rd x)\rd s \\
&&\qquad = \lambda \int_{\mathbb S^{d-1}}\int_0^\infty   \int_0^t  \psi_\alpha(z,e^{-\lambda s}r\xi) \rd s \rd \rho_\xi(r) \sigma(\rd \xi) \\
&&\qquad =\int_{\mathbb S^{d-1}}\int_0^\infty   \int_{re^{-\lambda t}}^r  \psi_\alpha(z,u\xi)u^{-1} \rd u \rd \rho_\xi(r) \sigma(\rd \xi) \\
&&\qquad =\int_{\mathbb S^{d-1}}\int_0^\infty \psi_\alpha(z,u\xi)  \int_0^\infty 1_{[re^{-\lambda t}\le u \le r]} \rd \rho_\xi(r) u^{-1} \rd u  \sigma(\rd \xi) \\
&&\qquad =\int_{\mathbb S^{d-1}}\int_0^\infty \psi_\alpha(z,u\xi) \left( q(\xi,u) - e^{-\alpha\lambda t}q(\xi,ue^{\lambda t}) \right)  u^{-1-\alpha} \rd u  \sigma(\rd \xi)\\
&&\qquad =  \left( 1 - e^{-\alpha\lambda t}\right)  \int_{\mathbb S^{d-1}}\int_0^\infty \psi_\alpha(z,u\xi) q(\xi,u)   u^{-1-\alpha} \rd u  \sigma(\rd \xi) \\
&&\qquad\quad +e^{-\alpha\lambda t} \int_{\mathbb S^{d-1}}\int_0^\infty \psi_\alpha(z,u\xi) \left( q(\xi,u) - q(\xi,ue^{\lambda t}) \right)  u^{-1-\alpha} \rd u  \sigma(\rd \xi),
\end{eqnarray*}
where the third line follows by the substitution $u=e^{-\lambda s}r$ and the fifth line by the fact that
\begin{eqnarray*}
 \int_0^\infty 1_{[e^{-\lambda t}r\le u \le r]} \rd \rho_\xi(r) &=&  \int_0^\infty 1_{[u\le r\le ue^{\lambda t}]} \rd \rho_\xi(r)= \rho_\xi(ue^{\lambda t}) - \rho_\xi(u) \\
&=&  \left( q(\xi,u) - e^{-\alpha\lambda t}q(\xi,ue^{\lambda t}) \right)  u^{-\alpha}.
\end{eqnarray*}

When $\alpha=1$ and $a\in\mathbb R$, we have
$$
\psi_1(az,x)=\psi_1(z,ax) + i a \langle z,x\rangle \left(1_{[|xa|\le 1]} - 1_{[|x|\le1]}\right).
$$
From here, proceeding as before, we get
\begin{eqnarray*}
&&\lambda \int_0^t \int_{\mathbb R^d}\psi_1(e^{-\lambda s}z,x) M(\rd x)\rd s \\
&&\qquad =  \left( 1 - e^{-\lambda t}\right)  \int_{\mathbb S^{d-1}}\int_0^\infty \psi_1(z,u\xi) q(\xi,u)   u^{-2} \rd u  \sigma(\rd \xi) \\
&&\qquad\quad +e^{-\lambda t} \int_{\mathbb S^{d-1}}\int_0^\infty \psi_1(z,u\xi) \left( q(\xi,u) - q(\xi,ue^{\lambda t}) \right)  u^{-2} \rd u  \sigma(\rd \xi)\\
&& \qquad\quad + i\lambda \int_0^t \int_{\mathbb R^d} \langle z,x\rangle \left(1_{|x|\le e^{\lambda s}} - 1_{|x|\le1}\right) M(\rd x) e^{-\lambda s}\rd s.
\end{eqnarray*}
Now, note that the third term equals
\begin{eqnarray*}
&&i \lambda\int_{\mathbb R^d} \langle z,x\rangle  \int_0^t\left(1_{1<|x|\le e^{\lambda s}} \right)e^{-\lambda s}\rd s M(\rd x)\\
&&\qquad=i \lambda\int_{1<|x|\le e^{\lambda t}} \langle z,x\rangle  \int_{\frac{\log|x|}{\lambda}}^t e^{-\lambda s}\rd s M(\rd x)\\
&&\qquad= i\int_{1<|x|\le e^{\lambda t}}  \langle z,x\rangle \left(|x|^{-1}-e^{-\lambda t}\right) M(\rd x).
\end{eqnarray*}

We now show the convergence to $\mathrm{TS}_{\alpha,d}(\sigma,q,b)$. It suffices to show that the characteristic function of the transition law approaches the characteristic function of $\mathrm{TS}_{\alpha,d}(\sigma,q,b)$ as $t\to\infty$. First, since $|q(\xi,u)|\le 1$ for each $\xi\in\mathbb S^{d-1}$ and $u>0$, we have
\begin{eqnarray*}
&&\lim_{t\to\infty} e^{-\alpha\lambda t}\left| \int_{\mathbb S^{d-1}}\int_0^\infty \psi_\alpha(z,u\xi) \left( q(\xi,u) - q(\xi,ue^{\lambda t}) \right)  u^{-1-\alpha} \rd u  \sigma(\rd \xi) \right|\\
&& \le  \lim_{t\to\infty} e^{-\alpha\lambda t}\int_{\mathbb S^{d-1}}\int_0^\infty\left|  \psi_\alpha(z,u\xi)\right|  u^{-1-\alpha} \rd u  \sigma(\rd \xi) =0,
\end{eqnarray*}
where the final equality follows from the fact that the integral is finite, which itself follows by standard facts about the function $\psi_\alpha$, see (26.4) in \cite{Billingsley:1995}. Now, note that $d_{\alpha}\to b$ as $t\to\infty$. This is immediate for $\alpha\ne1$ and  holds for $\alpha=1$ by dominated convergence. From here it follows that
\begin{eqnarray*}
\lim_{t\to\infty} \int_{\mathbb R^d}e^{i\langle x,y\rangle} P_t(y,\rd x)  = \exp\left\{i \langle  b,z\rangle + \int_{\mathbb R^d}\psi_\alpha(z,x) L_\alpha(\rd x)\right\},
\end{eqnarray*}
which is the characteristic function of $\mathrm{TS}_\alpha(\sigma,q,b)$ as required.
\end{proof}

\begin{proof}[Proof of Theorem \ref{thrm: main gen}]
We only prove \eqref{eq: trans RV} as the proof of \eqref{eq: trans RV stable} is similar. It suffices to check that the characteristic function of the random variable on the right side of \eqref{eq: trans RV} is given by $\exp\left\{C_t(y,z)\right\}$, where $C_t(y,z)$ is as in Theorem \ref{thrm: trans char func}. This follows immediately from the fact that the L\'evy measure of $\mathrm{TS}_{\alpha,d}(\sigma,q_0,0)$ is $(1-e^{-\alpha \lambda t})L_\alpha(\rd x)$ and $\sum_{j=1}^{N} X_j$ is a compound Poisson random variable, whose characteristic function is given in e.g.\ Proposition 3.4 of \cite{Cont:Tankov:2004}.
\end{proof}

\begin{proof}[Proof of Proposition \ref{prop:for AR}]
If $u\in(0, \epsilon e^{-\lambda t})$, then
$$
f_\xi(u)= \kappa_\xi \left( q(\xi,u) - q(\xi,ue^{\lambda t}) \right)  u^{-1-\alpha} \le \kappa_\xi M_\xi \left( e^{p(\xi)\lambda t} - 1 \right)  u^{p(\xi)-1-\alpha} .
$$
On the other hand, if $u\ge \epsilon e^{-\lambda t}$, then the fact that $q(\xi,u)\le1$ implies that
$$
f_\xi(u)= \kappa_\xi \left( q(\xi,u) - q(\xi,ue^{\lambda t}) \right)  u^{-1-\alpha} \le \kappa_\xi u^{-1-\alpha}
$$
and the result follows.
\end{proof}

\begin{proof}[Proof of Proposition \ref{prop: finite K}]
First, note that by change of variables
\begin{eqnarray*}
K &=& \int_{\mathbb S^{d-1}}\int_0^\infty  \left( q(\xi,u) - q(\xi,ue^{\lambda t}) \right)  u^{-1-\alpha} \rd u \sigma(\rd \xi) \\
&=& \int_0^\infty \int_{\mathbb S^{d-1}}\int_{(0,\infty)} \left( e^{-su^p} - e^{-su^pe^{p\lambda t}} \right)Q_\xi(\rd s) \sigma(\rd \xi) u^{-1-\alpha} \rd u  \\
&=&\int_0^\infty \int_{\mathbb R^d} \left( e^{-|x|u^p} - e^{-|x|u^pe^{p\lambda t}} \right)Q(\rd x) u^{-1-\alpha} \rd u  \\
&=&\int_0^\infty \left( e^{-v^p} - e^{-v^pe^{p\lambda t}} \right)v^{-1-\alpha} \rd v \int_{\mathbb R^d}|x|^{\alpha/p}Q(\rd x)   \\
&=& R(\mathbb R^d)  \int_0^\infty \left( e^{-v^p} - e^{-v^pe^{p\lambda t}} \right)v^{-1-\alpha} \rd v.
\end{eqnarray*}
Thus, if $R$ is an infinite measure, then $K=\infty$. Next, if $\alpha\ge p$, then Lemma \ref{lemma: exp fact} implies that 
\begin{eqnarray*}
 \int_0^\infty \left(e^{-u^p}-e^{-u^{p}e^{\lambda tp}}\right) u^{-1-\alpha} \rd u &\ge& \left(e^{\lambda tp}-1\right) \int_0^\infty e^{-u^{p}e^{\lambda tp}} u^{p-1-\alpha} \rd u\\
&=& \infty
\end{eqnarray*}
and we again have $K=\infty$. Henceforth, assume that $R$ is a finite measure and $p>\alpha$. In this case, Lemma \ref{lemma: integ diff} implies that
\begin{eqnarray*}
 \int_0^\infty \left(e^{-u^p}-e^{-u^{p}e^{\lambda tp}}\right) u^{-1-\alpha} \rd u &=& \alpha^{-1}\Gamma(1-\alpha/p)\left(e^{\lambda t\alpha}-1\right),
 \end{eqnarray*}
 which gives the form of $K$. Now assume that \eqref{eq: on R for F} holds. We again use  Lemma \ref{lemma: exp fact}, which gives, for any $u,v>0$,
\begin{eqnarray*}
\left|q(\xi,u) - q(\xi,v)\right| \le \int_{(0,\infty)} \left|e^{-su^p}-e^{-sv^p}\right| Q_\xi(\rd s) \le \left| u^p-v^p\right|  \int_{(0,\infty)} s Q_\xi(\rd s).
\end{eqnarray*}
From here the result follows by Lemma \ref{lemma: moments of Q}.
\end{proof}

\begin{proof}[Proof of Proposition \ref{prop: form pTaS}.]
Note that
\begin{eqnarray*}
\frac{1}{\kappa_\xi} &=& \int_0^\infty \left( q(\xi,u) - q(\xi,ue^{\lambda t}) \right) u^{-1-\alpha}\rd u\\
&=&\int_{(0,\infty)}\int_0^\infty \left( e^{-su^p} - e^{-su^pe^{p\lambda t}} \right) u^{-1-\alpha}\rd u Q_\xi(\rd s)\\
&=&\int_{(0,\infty)}\int_0^\infty \left( e^{-u^p} - e^{-u^pe^{p\lambda t}} \right) u^{-1-\alpha}\rd u s^{\alpha/p}Q_\xi(\rd s).
\end{eqnarray*}
From here the formula for $\kappa_\xi$ follows from Lemma \ref{lemma: integ diff}. The rest of the formulas are immediate.
\end{proof}

\begin{proof}[Proof of Proposition \ref{prop: alt bound}]
Lemma \ref{lemma: exp fact} implies that
\begin{eqnarray*}
u^{-1-\alpha}\left(q_\xi(u) - q_\xi(ue^{t\lambda}) \right) &=&  u^{-1-\alpha} \int_{(0,\infty)}\left( e^{-u^{p}r} - e^{-u^{p} e^{t\lambda p}r } \right) Q_\xi(\rd r)\\
&\le&  \left(e^{t\lambda p}-1\right) u^{p-1-\alpha} \int_{[\zeta,\infty)}e^{-u^{p}r} rQ_\xi(\rd r)\\
&\le& \left(e^{t\lambda p}-1\right) u^{p-1-\alpha} e^{-u^{p}\zeta}  \int_{(0,\infty)}rQ_\xi(\rd r),
\end{eqnarray*}
which gives the result.
\end{proof}

\begin{proof}[Proof of Propositions \ref{prop: for example 1} and \ref{prop: for example}]
The results in Proposition \ref{prop: for example 1} follow immediately from \eqref{eq: back to Q}, \eqref{eq:back to sig}, and \eqref{eq: Q}. We now turn to Propositions \ref{prop: for example}. Let $\phi(x) = (\alpha+\ell+1)^{-1}\left(\frac{x}{x+1}\right)^{\alpha+\ell+1}$ for $x>0$ and note that $\phi'(x) = x^{\alpha+\ell} (x+1)^{-\alpha-\ell-2}$. It follows that 
$$
B\int_{0}^\infty x^\alpha Q_\xi(\rd x) =\int_{0}^\infty (1+x)^{-2-\alpha-\ell} x^{\ell+\alpha}\rd x = \frac{1}{\alpha+\ell+1}.
$$
Using this fact we can easily put the formulas into the required form.
\end{proof}

\end{document}